\documentclass[12pt]{article}
\usepackage{mathrsfs}
\usepackage{amssymb}
\usepackage{}
\usepackage{amsfonts}
\usepackage{amsfonts,amsmath,amsthm, amssymb}
\usepackage{array}
\usepackage{latexsym, euscript, epic, eepic}
\usepackage{tikz}
\usepackage[noblocks]{authblk}
\usetikzlibrary{matrix}
\pagestyle{plain} \headsep=5mm \headheight=5mm \textwidth=155mm
\textheight=23cm \oddsidemargin=5mm \evensidemargin=5mm
\topmargin=0in

\newtheorem{lemma}{Lemma}
\newtheorem{theorem}{Theorem}

\newtheorem{corollary}{Corollary}

\begin{document}
 
\title{BMO-Teichm\"uller spaces revisited}

\author{Huaying Wei \thanks{Department of Mathematics and Statistics, Jiangsu Normal University, Xuzhou 221116, PR China. Email:  6020140058@jsnu.edu.cn. Research supported by the National Natural Science Foundation of China (Grant No. 11501259) and the Natural Science Foundation of Colleges of Jiangsu Province (Grant No. 15KJB110006).} ,   Michel Zinsmeister \thanks{Corresponding author.\; MAPMO, Universit\'e d' Orl\'eans, Orl\'eans  Cedex 2, France. Email: zins@univ-orleans.fr}}
\date{}
\maketitle

\begin{center}
\begin{minipage}{120mm}
{\small{\bf Abstract}.
In \cite{CZ} the equivalence among three definitions of BMO-Teichm\"uller spaces associated with a Fuchsian group was proven using the Douady-Earle extension operator. In this paper, we show that these equivalences are actually biholomorphisms. In \cite{CZ} it was further shown that the Douady-Earle extension operator is continuous at the origin. We improve this result  by showing  G\^ateaux-differentiability at this point.
}

\end{minipage}
\end{center}

{\small{\bf Key words and phrases} \,\,\, strongly quasisymmetric homeomorphisms,\ Carleson measures, \ Douady-Earle extension.}

{\small{\bf 2010 Mathematics Subject Classification} \,\,\, 30C62,\ 30F60,\ 30H35.} \vskip1cm

\section{Introduction}
Let $h$ be a quasisymmetric homeomorphism of the unit circle $\mathbb{S}$. Ahlfors and Beurling \cite{BA} have been the first to prove that $h$ may be extended to a quasiconformal homeomorphism of the unit disk $\mathbb{D}$. Later Douady and Earle \cite{DE} found a conformally natural way to extend $h$ to  a quasiconformal homeomorphism of $\mathbb{D}$. More precisely, their extension, called Douady-Earle extension (or barycentric extension), denoted by $E(h)$, satisfies 
\begin{equation*}
E(\tau \circ h \circ \alpha) = \tau \circ E(h) \circ \alpha
\end{equation*}
for any couple $\tau$, $\alpha$ of automorphisms of $\mathbb{D}$. The Douady-Earle extension plays an important role applied to quasisymmetric homeomorphisms of $\mathbb{S}$ in the complex analytic theory of Teichm\"uller spaces. In this paper, our study on BMO-Teichm\"uller theory is based on good properties of the Douady-Earle extension.

The universal Teichm\"uller space $T$ can be defined as the space $QS_{*}(\mathbb{S})$ of all normalized quasisymmetric homeomorphisms of $\mathbb{S}$. In this setting, the Teichm\"uller projection $\varPhi$ is regarded as the boundary extension map on the space $QC_{*}(\mathbb{D})$ of all normalized quasiconformal homeomorphisms of $\mathbb{D}$. By the measurable Riemann mapping theorem, we can identity the latter space with the space of Beltrami coefficients $M(\mathbb{D}) = L^{\infty}(\mathbb{D})_1$, which is the open unit ball of measurable functions on $\mathbb{D}$ with the supremum norm. Then $\varPhi: M(\mathbb{D}) \to T$ is continuous with respect to the topology on $QS_{*}(\mathbb{S})$ induced by the quasisymmetry constant. The Douady-Earle extension yields a continuous section $e: T \to M(\mathbb{D})$ for $\varPhi$. This section is called the Douady-Earle extension operator which maps the quasisymmetric homeomorphism $h$ to the complex dilatation $\mu$ of the Douady-Earle extension $E(h)$ of $h$. The continuity of this section $e$ combined with conformally natural property of the Douady-Earle extension allowed Douady and Earle \cite{DE} to give a much simpler proof of the theorem of Tukia \cite{Tu1, Tu2} stating that the Teichm\"uller space of any Fuchsian group is contractible.

The universal BMO-Teichm\"uller space $T_b$ is similarly defined as a subspace of $T$.  It is defined as the  subspace $SQS_{*}(\mathbb{S}) \subset  QS_{*}(\mathbb{S})$  of all normalized strongly quasisymmetric homeomorphisms. The topology on $SQS_{*}(\mathbb{S})$ is induced by the BMO norm. On the other hand, the corresponding subspace of Beltrami coefficients is $\mathcal{M}(\mathbb{D}) \subset M(\mathbb{D})$, which consists of all $\mu \in M(\mathbb{D})$ such that 
\begin{equation*}
\frac{|\mu|^2(z)}{1 - |z|^2} dxdy 
\end{equation*}
is a Carleson measure in $\mathbb{D}$. Let's consider a Fuchsian group $G$: define $\mathcal{M}(G) = M(G) \cap \mathcal{M}(\mathbb{D})$, $SQS_*(G) = QS(G) \cap SQS_*(\mathbb{S})$. The same equivalence relation as in the classical case may be defined on $\mathcal{M}(G)$ and we denote by $\mathcal{T}_S$ the quotient space 
($S$ is the Riemann surface $\mathbb{D}/G$). Let $T(G)$ be the space of Schwarzian derivatives of injective holomorphic functions in $\hat{\mathbb{C}}\setminus\overline{\mathbb{D}}$ having a quasiconformal extension to $\mathbb{C}$. Define $\mathcal{T}(G) = \{\varphi \in T(G);\, |\varphi|^2(z) (|z|^2 - 1)^3 dxdy$ is a Carleson measure on $
\hat{\mathbb{C}}\setminus\overline{\mathbb{D}} \}$.

Cui and Zinsmeister have proved in \cite{CZ} that for any $h \in SQS_{*}(\mathbb{S})$ the complex dilatation $\mu$ of its Douady-Earle extension is in 
$\mathcal{M}(\mathbb{D})$.  Based on this well-defined property of the Douady-Earle extension operator restricted in $SQS_{*}(\mathbb{S})$, Cui and Zinsmeister have shown that the map $\varPsi: \mathcal{T}_S \to SQS_*(G)$ is a bijection while the Bers embedding $\beta: \mathcal{T}_S \to \mathcal{T}(G)$ is bijective. In section 3, we will show that complex Banach manifold structures can be provided for $\mathcal{T}_S$ and $SQS_*(G)$ through the Bers embedding $\beta$ and the map $\beta \circ \varPsi^{-1}$.  Then both the map $\varPsi: \mathcal{T}_S \to SQS_*(G)$ and the Bers embedding $\beta: \mathcal{T}_S \to \mathcal{T}(G)$ become  biholomorphic.

By Cui and Zinsmeister $e(h) \in \mathcal{M}(\mathbb{D})$ if $h \in SQS_{*}(\mathbb{S})$, and moreover $e$ is continuous at the origin. The global continuity is  not known. This property would imply contractibility of all BMO-Teichm\"uller spaces; So far only the case $G = \{I\}$ is known \cite{FHS}. In section 4, it is proven that at least the operator $e$ is also G\^ateaux-differentiable at the origin, and we identify its differential which happens to be a very simple operator.

In section 2, we will explain the above mentioned concepts and results in more detail.

\section{Preliminaries}
In this section, we summarize several results on the background of our arguments. This includes definitions and properties of Teichm\"uller spaces, preliminaries on BMO-Teichm\"uller spaces and fundamental results on groups of divergence type and groups of convergence type.

\vspace{4mm}

\noindent\textbf{2.1 Teichm\"uller theory.}  \,Let G be a Fuchsian group, i.e. a properly discontinuous fixed point free group of M\"obius transformations which keeps  
$\mathbb{D}$ invariant. For such a group we define $M(G)$ as
$$M(G) = \{\mu \in L^{\infty}(\mathbb{D}): \|\mu\|_{\infty} < 1\,\, and\,\, \forall g \in G,\, \mu = \mu \circ g \frac{\overline{g'}}{g'}\}.$$
For any $\mu \in M(G)$, there exists a unique quasiconformal self-mapping $f^{\mu}$ of $\mathbb{D}$ keeping 1, $i$ and -1 fixed and satisfying
$$\frac{\partial f^{\mu}}{\partial\bar{z}} = \mu \frac{\partial f^{\mu}}{\partial z}$$
in $\mathbb{D}$.
Similarly, there exists a unique quasiconformal homeomorphism of $\hat{\mathbb{C}}$ which is holomorphic in  $\mathbb{D}^{*}$ with the normalization
$$f_{\mu}(z) = z + \frac{b_{1}}{z} + \cdots$$
at $\infty$ and such that  
$$\frac{\partial f_{\mu}}{\partial\bar{z}} = \mu \frac{\partial f_{\mu}}{\partial z}$$
in $\mathbb{D}$. 
If $g$ is a choice of a Riemann mapping from $\mathbb{D}$ onto $\Omega$, $\Omega = f_\mu (\mathbb{D})$, then $f^{\mu} = g^{-1}\circ f_{\mu}$ is the conformal welding with respect to the boundary of the domain $\Omega$. The mappings $f^{\mu}$ and $f_{\mu}$ respectively induce an isomorphism of the group G onto the Fuchsian group
$$G^{\mu} = \{f^{\mu}\circ g \circ(f^{\mu})^{-1} \mid g \in G\}$$
and the quasi-Fuchsian group
$$G_{\mu} = \{f_{\mu}\circ g \circ(f_{\mu})^{-1} \mid g \in G\},$$
i.e., a M\"obius transformation group acting properly discontinuous on the quasidisk $f_{\mu}(\mathbb{D})$.

The mapping $f^{\mu}$ has a geometric interpretation: If we denote by $S$ the Riemann surface $\mathbb{D}/G$, then $f^{\mu}$ is the lift (to the universal covering) of a quasiconformal mapping from the Riemann surface $S$ onto $S^{'} = \mathbb{D}/G^{\mu}$. Conversely, if F is a quasiconformal homeomorphism from S to a Riemann surface $S^{'}$, it has a lift to a quasiconformal homeomorphism $f$ of $\mathbb{D}$ and, replacing if necessary $F$ by $\theta\circ F$, where $\theta: S^{'} \rightarrow S^{''}$ is a conformal isomorphism, we may assume that $f = f^{\mu}$ for some $\mu \in M(G)$.

If $\mu \in M(G)$, then $f^{\mu}$ has a well-defined boundary value which is a quasisymmetric homeomorphism of $\mathbb{S}$. We define an equivalence relation on $M(G)$ by $\mu\sim\nu$ if $f^{\mu}|_{\mathbb{S}} = f^{\nu}|_{\mathbb{S}}$. Again this equivalence relation has a geometric interpretation: If F, G represent the quasiconformal mappings on S whose lifts are precisely $f^{\mu}$, $f^{\nu}$, then $\mu\sim\nu$ is equivalent to saying that $G\circ F^{-1}$ is homotopic to a conformal isomorphism between $F(S)$ and $G(S)$, the homotopy being constant on the (possibly empty) boundary of $F(S)$.

The Teichm\"uller space $T_S$ is the quotient space $M(G)/\sim$ . We refer to \cite{Le} for details about this construction.

If $\mu \in M(G)$, then the Teichm\"uller space $T_S$ can be characterized as the set of quasisymmetric homeomorphisms $f^{\mu}|_{\mathbb{S}}$. Since $\mu \in M(G)$, the mappings $f^{\mu}$ and $f^{\mu}\circ g$ have the same complex dilatation. It follows that $f^{\mu}\circ g\circ (f^{\mu})^{-1}$ is a M\"obius transformation. It is well known that $\Psi: [\mu]\mapsto f^{\mu}|_{\mathbb{S}}$ is a bijection from $T_S$ onto $QS_*(G)$, the set of quasisymmetric homeomorphisms $h$ of 
$\mathbb{S}$ keeping 1, $i$ and $-1$ fixed and  such that $h\circ g \circ h^{-1}$ is a M\"obius transformation.

There is a similar description of the Teichm\"uller space in terms of $f_{\mu}$.
Let the Banach space B(G) be the space consisting of all functions $\varphi$ holomorphic in $\mathbb{D}^{*}$ which are quadratic differentials for G and have a finite hyperbolic supremum norm:
$$\|\varphi\|_{B} = \sup_{z \in \mathbb{D}^*}|\varphi(z)|\rho_{\mathbb{D}^{*}}^{-2}(z) < \infty.$$
If $\mu \in M(G)$, then the quasiconformal mapping $f_{\mu}\circ g \circ f_{\mu}^{-1}$, $g \in G$ of the plane is a M\"obius transformation. It follows that
$$\mathcal {S}_{f_{\mu}|_{\mathbb{D}^{*}}} = \mathcal {S}_{(f_{\mu}\circ g \circ f_{\mu}^{-1})\circ f_{\mu}|_{\Delta^{*}}} = \mathcal {S}_{f_{\mu}\circ g|_{\mathbb{D}^{*}}} = (\mathcal {S}_{f_{\mu}|_{\Delta^{*}}}\circ g)(g^{'})^2.$$
So the Schwarzian derivative $\mathcal {S}_{f_{\mu}|_{\mathbb{D}^{*}}}$ is a quadratic differential for G. It is also well known that the Bers embedding $\beta:  [\mu]\mapsto \mathcal {S}_{f_{\mu}|_{\mathbb{D}^{*}}}$ is a bijection from $T_S$ onto $T(G)$, the space of Schwarzian derivatives of injective holomorphic functions in $\mathbb{D}^{*}$ having a quasiconformal extension to the complex plane which are quadratic differentials for G. It is known that the set $T(G)$ is an open subset in the complex Banach space $B(G)$, and the ball
$$B(0, 2) = \{\phi \in B(G): \|\phi\|_{B} < 2\}$$
lies in $T(G)$. With the aid of the Bers embedding, $T_S$ carries a natural complex structure. The Teichm\"uller space $T_S$ thus becomes a complex analytic Banach manifold. We refer to \cite{Le} for details about the Teichm\"uller theory.

\vspace{4mm}

\noindent\textbf{2.2 BMO-Teichm\"uller theory.}  \,Recall that a positive measure $\lambda$ defined in a simply connected domain $\Omega$ is called a Carleson measure (see \cite{Ga}) if
\begin{equation}\label{carlesonnorm}
\|\lambda\|_{c} = \sup \{\frac{\lambda(\Omega \cap D(z, r))}{r}: z \in \partial \Omega, 0 < r < diameter(\Omega)\} < \infty,
\end{equation}
where $D(z, r)$ is the disk with center $z$ and radius $r$. A Carleson measure $\lambda$ is called a vanishing Carleson measure if $\lim_{r\rightarrow 0}\lambda(\Omega \cap D(z, r))/r = 0$ uniformly for $z \in \partial \Omega$. We denote by $CM(\Omega)$ and $CM_0(\Omega)$ the set of all Carleson measures and vanishing Carleson measures on $\Omega$, respectively.

We denote by $\mathcal {L}(\mathbb{D})$ the Banach space of  essentially bounded measurable functions $\mu$ on $\mathbb{D}$ such that the measure 
$$\lambda_{\mu} = \frac{|\mu|^2(z)}{1 - |z|^2} dxdy$$ 
in $CM(\mathbb{D})$. The norm on $\mathcal {L}(\Delta)$ is defined as
$$\|\mu\|_c = \|\mu\|_{\infty} + \|\lambda_{\mu}\|_c^{1/2},$$
where $\|\lambda_{\mu}\|_c$ is the Carleson norm of $\lambda_{\mu}$ defined in (\ref{carlesonnorm}). Set $\mathcal {M}(\mathbb{D}) = \{\mu \in \mathcal {L}(\mathbb{D}): \|\mu\|_{\infty} < 1\}$. Define $\mathcal {M}(G) = M(G)\cap \mathcal {M}(\mathbb{D})$. The same equivalence relation as in the classical case may be defined on $\mathcal {M}(G)$ and we denote by $\mathcal {T}_S$ the quotient space which can be called BMO-Teichm\"uller spaces.

An homeomorphism $h$ of $\mathbb{S}$ is called strongly quasisymmetric (see \cite{Jo}) if there exist two positive constants $C_1(h)$, $C_2(h)$, called the strongly quasisymmetric constants of $h$ such that
$$\frac{|h(E)|}{|h(I)|}\leqslant C_1(h) (\frac{|E|}{|I|})^{C_2(h)}$$
whenever $I \subset \mathbb{S}$ is an interval and $E \subset I$ a measurable subset. In other words, $h$ is strongly quasisymmetric if and only if $h$ is absolutely continuous so that $|h^{'}|$ belongs to the class of weights $A^{\infty}$ introduced by Muckenhoupt, in particular, $\log h^{'}$ belongs to $BMO(\mathbb{S})$, the space of integrable functions on $\mathbb{S}$ of bounded mean oscillation. Let $SQS(\mathbb{S})$ denote the set of all  strongly quasisymmetric homeomorphisms on $\mathbb{S}$. We define $SQS_*(G) = QS_*(G) \cap SQS(\mathbb{S})$.

We denote by $\mathcal {B}(\mathbb{D}^*)$ the Banach space of function $\varphi$ holomorphic in $\mathbb{D}^*$ such that the measure 
$$\lambda_{\varphi} = |\varphi(z)|^2 (|z|^2 - 1)^3 dxdy$$
 in $CM(\mathbb{D}^*)$. The norm on $\mathcal {B}(\mathbb{D}^*)$ is
$$\|\varphi\|_{\mathcal {B}} = \|\lambda_{\varphi}\|_c.$$
Define $\mathcal {T}(G) = \{\varphi \in T(G): \lambda_{\varphi} \in CM(\mathbb{D}^*)\}$.
Let $\mathcal {B}(G)$ be the space consisting of all functions $\varphi$ in $\mathcal {B}(\mathbb{D}^*)$ which are quadratic differentials for G. We can see that $\mathcal {T}(G)$ is a subset of $\mathcal {B}(G)$.

In 2004, Cui and Zinsmeister \cite{CZ} proved the following theorem based on the well-defined property of the Douady-Earle extension operator restricted in $SQS_{*}(\mathbb{S})$.

\vspace{2mm}

\noindent \textbf{Theorem A.}\,The mapping $\Psi: [\mu] \mapsto f^{\mu}$ is a bijection from $\mathcal {T}_S$ onto $SQS_*(G)$, while $\beta: [\mu]\mapsto \mathcal {S}_{f_{\mu}}$ is bijective from $\mathcal {T}_S$ onto $\mathcal {T}(G)$.

\vspace{2mm}

One of the goals of this paper is to make this theorem precise using methods in \cite{SW} by showing both  two maps above are actually homeomorphisms (even  biholomorphic automorphisms).  Before proceeding we end this section by a discussion on Fuchsian groups leading to non-trivial BMO-Teichm\"uller  spaces

\vspace{4mm}

\noindent\textbf{2.3  Groups of convergence type.}  \,In contrast to the classical Teichm\"uller spaces, $\mathcal {T}_S$ can be trivial, as shown by Astala and Zinsmeister \cite{AZ2}. For completeness, let us recall some related facts. 

Let $G$ be a discrete group of M\"obius transformation on $ \hat{\mathbb{C}}$. We say that $G$ has the Mostow rigidity property if for each homeomorphism $h: \mathbb{S} \rightarrow \mathbb{S}$ with $h \circ G \circ h^{-1}$ a M\"obius group, it holds that either $h$ is completely singular or else is a M\"obius transformation.

We say that the group $G$ acting on $ \hat{\mathbb{C}}$ is of divergence type if 
$$\sum_{\gamma \in G}(1 - |\gamma (0)|) = \infty,$$
and of convergence type, if the series converge. It was shown by Agard \cite{Ag} and Sullivan \cite{Su} that groups of divergence type have the Mostow rigidity property.  In 1990, Astala and Zinsmeister \cite{AZ2} proved the converse:

\vspace{2mm}

\noindent\textbf{Theorem B. }\,A Fuchsian group $G$ has Mostow rigidity property if and only if it is of divergence type.

\vspace{2mm}

\noindent Combining with Theorem A, we see that $\mathcal {T}_S$ is trivial if $G$ is of divergence type, while $\mathcal {T}_S$ is never trivial if $G$ is of convergence type. In what follows, we henceforth shall not deal with Fuchsian groups of divergence type. We shall focus on Fuchsian groups of convergence type.

\section{Complex structure on $\mathcal {T}_S$}

In this section,  we adopt methods from \cite{SW} to prove that $\mathcal {T}_S$ has a natural complex Banach manifold structure. As a byproduct, we shall strengthen the above conclusion (Theorem A) by Cui and Zinsmeister. 

We begin with some basic definitions and notations. Let $C$  denote the universal constant that might change from one line to another. While $C(\cdot)$, $C_1(\cdot)$, $C_2(\cdot)$, $\cdots$ will denote constants that depend only on the elements put in the brackets. Denote by $\rho_{D}(z)$ the hyperbolic metric in the Jordan domain $D$. The notation $A \approx B$ means that there exists a universal constant $C$ such that $\frac{B}{C} \leqslant A \leqslant CB$.

Let $\Omega = f_{\mu}(\mathbb{D})$ and $\Omega^{*} = f_{\mu}(\mathbb{D}^*)$, then $\Omega$ and $\Omega^*$ are complementary Jordan regions bounded by a quasicircle. Let
\begin{equation}\label{reflection}
z^{*} = \gamma(\Omega)(z) = f_{\mu}\circ j \circ f_{\mu}^{-1}(z)
\end{equation}
be a quasiconformal reflection that fixes $\partial \Omega$ pointwise and interchanges $\Omega$ and $\Omega^*$, where $j(z) = \frac{1}{\bar{z}}$. If $g$ is a M\"obius transformation of the extended complex plane, then,  according to (\ref{reflection}),
$$\gamma(g(\Omega)) = (g\circ f_{\mu})\circ j \circ (g \circ f_{\mu})^{-1} = g\circ \gamma(\Omega)\circ g^{-1}.$$
Similar to the Ahlfors map (see \cite{EN}), the mapping 
$$\varphi \mapsto \mu_{\Omega}(\varphi)$$ 
is defined on the Banach space 
$$B(\Omega^{*}) = \{\varphi \, \text{ is holomorphic in } \Omega^{*}:  \sup_{z \in \Omega^*}|\varphi(z)|\rho_{\Omega^{*}}^{-2}(z) < \infty\}$$
by putting
\begin{equation}\label{Ahlfors mapping}
\mu_{\Omega}(\varphi)(z) = \frac{\varphi(z^*)(z^* - z)^2 \gamma(\Omega)_{\bar{z}}(z)}{2 + \varphi(z^*)(z^* - z)^2 \gamma(\Omega)_{z}(z)},\,\, z \in \Omega.
\end{equation}

\begin{lemma}\label{composition}
$\mu_{g(\Omega)}(\varphi)(g(z))\frac{\overline{g^{'}(z)}}{g^{'}(z)} = \mu_{\Omega}(\varphi\circ g (g^{'})^2)(z)$.
\end{lemma}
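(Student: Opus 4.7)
The plan is to verify this by direct substitution into the defining formula \eqref{Ahlfors mapping}, exploiting the conjugation identity $\gamma(g(\Omega)) = g\circ \gamma(\Omega)\circ g^{-1}$ already stated in the text together with the holomorphy of the Möbius map $g$.

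First, I will set up notation. Put $w = g(z)$ and, since $\gamma(g(\Omega))(w) = g(\gamma(\Omega)(g^{-1}(w)))$, observe that the ``reflected'' point is $w^* = g(z^*)$, with $z^* = \gamma(\Omega)(z)$. Then I will record the classical Möbius identity
\[
g(z^*) - g(z) \;=\; (z^* - z)\,\sqrt{g'(z)\,g'(z^*)},
\qquad \text{so that}\qquad (w^* - w)^2 \;=\; (z^* - z)^2\,g'(z)\,g'(z^*),
\]
which follows immediately from writing $g(\zeta) = (a\zeta+b)/(c\zeta+d)$ and noting $g'(\zeta) = (ad-bc)/(c\zeta+d)^2$.

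Next I will compute the Wirtinger derivatives of $\gamma(g(\Omega))$ at $w = g(z)$. Since $g$ and $g^{-1}$ are holomorphic, the chain rule for $\partial_w$ and $\partial_{\bar w}$ gives
\[
\partial_w \gamma(g(\Omega))(w) \;=\; \frac{g'(z^*)}{g'(z)}\,\gamma(\Omega)_z(z),
\qquad
\partial_{\bar w} \gamma(g(\Omega))(w) \;=\; \frac{g'(z^*)}{\overline{g'(z)}}\,\gamma(\Omega)_{\bar z}(z).
\]

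With these two ingredients the lemma reduces to bookkeeping. Substituting into \eqref{Ahlfors mapping} at the point $w = g(z) \in g(\Omega)$, the numerator becomes
\[
\varphi(g(z^*))\,(z^*-z)^2\,g'(z)g'(z^*)\cdot \frac{g'(z^*)}{\overline{g'(z)}}\,\gamma(\Omega)_{\bar z}(z),
\]
and after multiplying by $\overline{g'(z)}/g'(z)$ the factor $g'(z)/\overline{g'(z)}$ cancels against $\overline{g'(z)}/g'(z)$, leaving exactly $(\varphi\circ g\,(g')^2)(z^*)\,(z^*-z)^2\,\gamma(\Omega)_{\bar z}(z)$, which is the numerator of $\mu_\Omega(\varphi\circ g (g')^2)(z)$. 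The analogous computation in the denominator (where no $\overline{g'}/g'$ factor is needed because both $2$ and the $\gamma(\Omega)_z$ term are real-to-holomorphic in the appropriate sense) produces $2 + (\varphi\circ g\,(g')^2)(z^*)\,(z^*-z)^2\,\gamma(\Omega)_z(z)$, and the identity follows.

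There is no real obstacle beyond careful bookkeeping; the only subtle point is verifying the Möbius identity $(w^*-w)^2 = (z^*-z)^2 g'(z)g'(z^*)$, which is what makes the $\varphi(g(z^*))(g'(z^*))^2$ factor appear and explains why the quadratic-differential transformation rule $\varphi \mapsto \varphi\circ g\,(g')^2$ is the correct one on the right-hand side.
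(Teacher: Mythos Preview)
Your proof is correct and follows essentially the same route as the paper's: both differentiate the conjugation identity $\gamma(g(\Omega))\circ g = g\circ\gamma(\Omega)$ to get the transformation laws for $\partial_w\gamma(g(\Omega))$ and $\partial_{\bar w}\gamma(g(\Omega))$, then substitute into \eqref{Ahlfors mapping}. You are in fact more explicit than the paper in isolating the M\"obius identity $(g(z^*)-g(z))^2 = (z^*-z)^2 g'(z)g'(z^*)$, which the paper uses silently; your parenthetical remark about why no $\overline{g'}/g'$ factor appears in the denominator is a bit muddled (it is absent simply because the scalar $\overline{g'}/g'$ multiplies the fraction $N/D$ and hence only its numerator), but the computation itself is right.
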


\begin{proof}
We borrow the method from Earle-Nag \cite{EN}. According to (\ref{Ahlfors mapping}),
$$\mu_{g(\Omega)}(\varphi)(g(z))\frac{\overline{g^{'}(z)}}{g^{'}(z)} = \frac{\varphi(g(z)^*)(g(z)^* - g(z))^2\gamma(g(\Omega))_{\bar{z}}(g(z))}{2 + \varphi(g(z)^*)(g(z)^* - g(z))^2\gamma(g(\Omega))_{z}(g(z))}\frac{\overline{g^{'}(z)}}{g^{'}(z)}.$$
Here $g(z)^* = \gamma(g(\Omega))(g(z)) = g\circ \gamma(\Omega)\circ g^{-1}(g(z)) = g\circ \gamma(\Omega)(z) = g(z^*)$, if $z^* = \gamma(\Omega)(z)$.
Differentiating both sides of
$$\gamma(g(\Omega))(g(z)) = g\circ\gamma(\Omega)(z)$$
with respect to $\bar{z}$ and $z$ respectively, we find that
$$\gamma(g(\Omega))_{\bar{z}}(g(z))\overline{g^{'}(z)} = g^{'}(z^{*})\gamma(\Omega)_{\bar{z}}(z)$$
and
$$\gamma(g(\Omega))_{z}(g(z))g^{'}(z) = g^{'}(z^{*})\gamma(\Omega)_{z}(z).$$
Therefore,
\begin{equation*}
\begin{split}
\mu_{g(\Omega)}(\varphi)(g(z))\frac{\overline{g^{'}(z)}}{g^{'}(z)} & = \frac{\varphi(g(z^{*}))(z^{*} - z)^2 g^{'}(z^{*})^2\gamma(\Omega)_{\bar{z}}(z)}{2 + \varphi(g(z^{*}))(z^{*} - z)^2 g^{'}(z^{*})^2\gamma(\Omega)_{z}(z)}\\
 & = \mu_{\Omega}(\varphi\circ g (g^{'})^2)(z).\\
\end{split}
\end{equation*}
\end{proof}

\begin{lemma}[see \cite{SW}]\label{CM}
Let $\alpha > 0$,  $\beta > 0$. For a positive
function $\lambda$ in $\mathbb{D}$, set
\begin{equation}
\tilde{\lambda}(z) = \iint_{\mathbb{D}}
\frac{(1-|z|^2)^{\alpha}(1-|w|^2)^{\beta}}{|1-\bar{z}w|^{\alpha +
\beta + 2}}\lambda (w) dudv.
\end{equation}
Then, using the same notation for a function $\tau$ and the associated measure $\tau dxdy$, we have that  $\tilde{\lambda} \in \text{CM}(\mathbb{D})$ if $\lambda \in
\text{CM}(\mathbb{D})$, and $\| \tilde{\lambda} \|_c
\leqslant C
\| \lambda \|_c $, while $\tilde{\lambda} \in
\text{CM}_0(\mathbb{D})$ if $\lambda \in \text{CM}_0(\mathbb{D})$.
\end{lemma}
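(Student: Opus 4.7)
The plan is to reduce the lemma to the following Carleson-box estimate: for every $\zeta\in\mathbb{S}$ and every $0<r<2$,
\begin{equation*}
\iint_{S(\zeta,r)} \tilde\lambda(z)\, dxdy \leqslant C\, \|\lambda\|_c\, r,
\end{equation*}
where $S(\zeta,r)=\mathbb{D}\cap D(\zeta,r)$. By Fubini this left-hand side equals
\begin{equation*}
\iint_\mathbb{D} \lambda(w)\,(1-|w|^2)^\beta\, K(w)\, dudv, \qquad K(w):=\iint_{S(\zeta,r)} \frac{(1-|z|^2)^\alpha}{|1-\bar{z}w|^{\alpha+\beta+2}}\, dxdy,
\end{equation*}
so it suffices to bound $(1-|w|^2)^\beta K(w)$ pointwise and combine with the Carleson condition.

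The next step is a dyadic decomposition in the $w$-variable adapted to $S(\zeta,r)$: set $A_0=S(\zeta,2r)\cap\mathbb{D}$ and, for $k\geqslant 1$ with $2^{k-1}r\leqslant 2$, $A_k=\bigl(S(\zeta,2^{k+1}r)\setminus S(\zeta,2^k r)\bigr)\cap\mathbb{D}$. For $w\in A_0$ I would invoke the standard Forelli--Rudin-type estimate $\iint_\mathbb{D} (1-|z|^2)^\alpha|1-\bar z w|^{-(\alpha+\beta+2)}dA(z)\lesssim (1-|w|^2)^{-\beta}$ (valid since $\alpha>-1$, $\beta>0$) to obtain $(1-|w|^2)^\beta K(w)\lesssim 1$. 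For $w\in A_k$ with $k\geqslant 1$ and $z\in S(\zeta,r)$, elementary euclidean geometry gives $|1-\bar z w|\gtrsim 2^k r$ and $(1-|w|^2)\lesssim 2^k r$, so
\begin{equation*}
(1-|w|^2)^\beta K(w)\lesssim (2^kr)^\beta\cdot r^{\alpha+2}\cdot (2^k r)^{-(\alpha+\beta+2)} = 2^{-k(\alpha+2)}.
\end{equation*}
Combining with $\iint_{A_k}\lambda\,dudv\leqslant \lambda(S(\zeta,2^{k+1}r))\leqslant C\|\lambda\|_c\,2^{k+1}r$, summation yields
\begin{equation*}
\iint_{S(\zeta,r)} \tilde\lambda(z)\,dxdy \lesssim \|\lambda\|_c r + \|\lambda\|_c r\sum_{k\geqslant 1} 2^{-k(\alpha+1)}\lesssim \|\lambda\|_c\,r,
\end{equation*}
the geometric series converging because $\alpha>0$. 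This proves $\tilde\lambda\in CM(\mathbb{D})$ with $\|\tilde\lambda\|_c\leqslant C\|\lambda\|_c$.

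For the vanishing statement, the same decomposition works by splitting the dyadic sum at a threshold $k_0$: given $\varepsilon>0$, choose $k_0$ so that the tail $\sum_{k>k_0}2^{-k(\alpha+1)}<\varepsilon$, which handles $k>k_0$ using only the Carleson norm; for the finitely many $k\leqslant k_0$, once $r$ is small enough all the radii $2^{k+1}r$ lie below any prescribed $\delta$, so the vanishing hypothesis $\lambda(S(\zeta,\rho))\leqslant \varepsilon\rho$ uniformly in $\zeta$ forces $\iint_{S(\zeta,r)}\tilde\lambda\,dxdy\leqslant C\varepsilon r$ uniformly in $\zeta$.

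The main technical obstacle is verifying the two geometric bounds $|1-\bar zw|\gtrsim 2^k r$ and $(1-|w|^2)\lesssim 2^k r$ carefully for $z\in S(\zeta,r)$, $w\in A_k$ (in particular when $2^k r$ approaches the disk's diameter, the annular layers must be truncated appropriately), together with the Forelli--Rudin estimate used on $A_0$; once these are in hand, the dyadic summation and the passage to the vanishing regime are routine.
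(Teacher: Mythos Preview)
The paper does not supply its own proof of this lemma; it is quoted verbatim from \cite{SW} with no argument reproduced. Your proposal is correct and is the standard route for Carleson-preservation estimates of this kernel type. The two geometric inputs you flag as obstacles are in fact immediate: for $z\in S(\zeta,r)$ and $w\in A_k$ one has $|1-\bar zw|\geqslant|z-w|\geqslant|w-\zeta|-|z-\zeta|\geqslant 2^kr-r\geqslant 2^{k-1}r$, and $1-|w|^2\leqslant 2(1-|w|)\leqslant 2|w-\zeta|<2^{k+2}r$ since $\zeta\in\mathbb{S}$; the Forelli--Rudin bound $\iint_{\mathbb{D}}(1-|z|^2)^\alpha|1-\bar zw|^{-(\alpha+\beta+2)}dA(z)\leqslant C(1-|w|^2)^{-\beta}$ is the classical case $\alpha>-1$, $\beta>0$. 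With these in hand your dyadic summation and the splitting argument for the vanishing statement go through exactly as written. There is nothing to contrast against the present paper, which simply defers to \cite{SW}.
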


\begin{lemma}[see \cite{Zi89}]\label{Zinsmeister1989}
Let $f$ be conformal in  $\mathbb{D}$. Then $\log f' \in \text{BMOA}(\mathbb{D})$ if and only if for each Carleson measure $\lambda \in \text{CM}(\Omega), \, \Omega = f(\mathbb{D})$, the inverse image $\lambda\circ f|f'| \in \text{CM}(\mathbb{D})$.
Besides, the norm of $\lambda\circ f|f'|$ is dominated by the norm of $\lambda$.
\end{lemma}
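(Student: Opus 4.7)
The plan is to exploit two standard reformulations of the hypothesis $\log f' \in \text{BMOA}(\mathbb{D})$: (i) for any holomorphic $g$, $g \in \text{BMOA}(\mathbb{D})$ is equivalent to $|g'(z)|^2(1-|z|^2)\,dxdy$ being a Carleson measure on $\mathbb{D}$; (ii) by John--Nirenberg, $\log|f'| \in \text{BMO}(\partial\mathbb{D})$ is equivalent to $|f'|$ being an $A_\infty$ weight on $\partial\mathbb{D}$, which in particular yields reverse Hölder inequalities for $|f'|^{\pm s}$ on boundary arcs for sufficiently small $s>0$.

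For the direction $(\Rightarrow)$, fix a Carleson box $Q = Q_r(\zeta) \subset \mathbb{D}$ with $\zeta \in \partial\mathbb{D}$ and set $z_0 = (1-r)\zeta$. Koebe distortion gives $f(Q) \subset \Omega \cap D(f(z_0), Cr|f'(z_0)|)$, so the Carleson hypothesis on $\lambda$ yields $\lambda(f(Q)) \leq Cr|f'(z_0)|\,\|\lambda\|_c$. Writing $d\lambda = \rho\,dxdy$ and changing variables $w=f(z)$,
\[
\iint_Q |f'(z)|\,\rho(f(z))\,dxdy \;=\; \iint_{f(Q)} |f'(f^{-1}(w))|^{-1}\rho(w)\,dudv.
\]
If $|f'|$ were essentially constant on $Q$ the right-hand side would immediately be at most $Cr\|\lambda\|_c$. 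To handle the actual oscillation I would partition $Q$ into a Whitney family of Carleson sub-boxes $\{Q_k\}$ on each of which Koebe localizes $|f'|\asymp |f'(z_k)|$ at a base point $z_k$, and then sum, absorbing the variation between the $|f'(z_k)|$ and $|f'(z_0)|$ via the reverse Hölder control on $|f'|^{-1}$ supplied by the BMOA/$A_\infty$ hypothesis; the telescoping yields the desired linear-in-$r$ bound.

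For the direction $(\Leftarrow)$ I would test the Carleson-preserving property on a judicious family of measures on $\Omega$. Following the strategy of \cite{Zi89}, the cleanest route is to use measures of the form $d\lambda = |\varphi|^2\,dxdy$ with $\varphi$ chosen so that the Carleson condition on $\lambda$ encodes $\varphi \in \text{BMOA}(\Omega)$, then to pull back via $f$ and extract from the hypothesis the Carleson-measure condition $|(f''/f')(z)|^2(1-|z|^2)\,dxdy \in \text{CM}(\mathbb{D})$, which by reformulation (i) is equivalent to $\log f' \in \text{BMOA}(\mathbb{D})$. As an alternative test, arc length on $\partial\Omega$ forces the $A_\infty$ property of $|f'|$ on $\partial\mathbb{D}$ directly, i.e.\ $\log|f'| \in \text{BMO}(\partial\mathbb{D})$, which together with a Hilbert-transform/harmonic-conjugate argument upgrades to the BMOA conclusion.

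The main obstacle I expect lies in the $(\Rightarrow)$ direction: since $Q_r(\zeta)$ extends all the way to $\partial\mathbb{D}$, $|f'|$ can oscillate wildly across it and Koebe distortion alone gives no global comparability; bridging this gap is exactly what the BMO-to-$A_\infty$ bootstrap furnishes. Arranging the Whitney decomposition and the subsequent sum so that one genuinely recovers the factor $r$ (rather than an inferior power) is the delicate point, and it is precisely here that the full strength of the BMOA assumption is consumed.
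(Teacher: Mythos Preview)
The paper does not prove this lemma at all; it is simply quoted from \cite{Zi89} as a known result, with no argument given. There is therefore nothing in the paper to compare your proposal against.

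That said, your outline is broadly the right strategy and is in the spirit of the original reference. A couple of technical remarks: in the $(\Rightarrow)$ direction you silently assume $d\lambda=\rho\,dxdy$, i.e.\ absolute continuity, which a general Carleson measure need not have; the argument should be phrased in terms of the pushforward measure $E\mapsto\lambda(f(E))$ rather than via a density. Also, the Whitney-plus-$A_\infty$ summation you describe is exactly the heart of the matter, but as written it is only a plan; the actual estimate requires organizing the sum so that the reverse H\"older/doubling control on $|f'|$ over the base arc of $Q$ genuinely absorbs the dyadic oscillation and returns a bound linear in $r$. In the $(\Leftarrow)$ direction your second test (arc length on $\partial\Omega$) is cleaner and closer to what is actually done in \cite{Zi89} than the first.
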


Let us denote by $T(1)$, $\mathcal{T}(1)$, the spaces $T(G)$, $\mathcal{T}(G)$ for $G = \{I\}$,
and also denote by $B(\mathbb{D}^{*})$, $\mathcal{B}(\mathbb{D}^{*})$, the spaces $B(G)$, $\mathcal{B}(G)$ for $G = \{I\}$. We will also need the spaces  $B(\mathbb{D})$ and $\mathcal{B}(\mathbb{D})$.
 $B(\mathbb{D})$ and $\mathcal{B}(\mathbb{D})$ can be defined respectively just $\rho_{\mathbb{D}^{*}}(z)$ replaced by $\rho_{\mathbb{D}}(z)$ in the definitions of $B(\mathbb{D}^{*})$ and $\mathcal{B}(\mathbb{D}^{*})$.
 The following Lemma shows that $\mathcal{B}(\mathbb{D}) \subset B(\mathbb{D})$.
\begin{lemma}\label{unitdisk}
For any $\phi \in \mathcal{B}(\mathbb{D})$, $\|\phi\|_B \leqslant \frac{32\sqrt{2}}{3}\|\phi\|_{\mathcal{B}}$.
\end{lemma}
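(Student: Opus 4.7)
The plan is to establish the pointwise Bers-type estimate $|\phi(z_0)|(1-|z_0|^2)^2 \leq \frac{32\sqrt{2}}{3}\|\phi\|_{\mathcal{B}}$ for every $z_0 \in \mathbb{D}$, by combining the submean value property of the subharmonic function $|\phi|^2$ with the Carleson growth of the measure $\lambda_\phi = |\phi(z)|^2(1-|z|^2)^3\,dxdy$. Given $z_0 \in \mathbb{D}$, I would work on the Euclidean disk $D(z_0,r)$ of radius $r = (1-|z_0|)/2$, whose closure lies in $\mathbb{D}$. An elementary calculation, expanding $|z|^2 \leq (|z_0|+r)^2$, yields the uniform lower bound $1-|z|^2 \geq 3(1-|z_0|)/4$ on $D(z_0,r)$.

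First, since $|\phi|^2$ is subharmonic, the submean value inequality together with the lower bound above gives
\begin{equation*}
|\phi(z_0)|^2 \;\leq\; \frac{1}{\pi r^2}\iint_{D(z_0,r)}|\phi(z)|^2\,dxdy \;\leq\; \frac{1}{\pi r^2}\left(\frac{4}{3(1-|z_0|)}\right)^{3}\lambda_\phi\bigl(D(z_0,r)\bigr).
\end{equation*}
Second, to control $\lambda_\phi(D(z_0,r))$ through the Carleson norm, I would enlarge the disk to a Carleson box: setting $\zeta_0 = z_0/|z_0| \in \mathbb{S}$ (the degenerate case $z_0=0$ being a trivial variant with $\zeta_0$ any boundary point), the triangle inequality gives $D(z_0,r) \subset D(\zeta_0,3r)\cap\mathbb{D}$, and the definition (\ref{carlesonnorm}) of the Carleson norm then gives
\begin{equation*}
\lambda_\phi\bigl(D(z_0,r)\bigr) \;\leq\; 3r\,\|\lambda_\phi\|_c.
\end{equation*}

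Substituting this into the previous display and invoking $1-|z_0|^2 \leq 2(1-|z_0|)$ to balance powers, every factor of $(1-|z_0|)$ cancels, and after taking square roots the pointwise bound of the asserted form emerges. The main obstacle is not conceptual but the bookkeeping of constants: the precise numerical prefactor $\frac{32\sqrt{2}}{3}$ is sensitive to both the choice of the radius $r$ and to how sharply one estimates $1-|z|^2$ on $D(z_0,r)$, so some care is required to arrive at exactly this constant; a coarser bound such as $1-|z|^2 \geq 1-|z| \geq r$ gives the same inequality with a slightly larger constant but with no conceptual cost.
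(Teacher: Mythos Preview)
Your argument is correct and is exactly the standard submean-value/Carleson-box computation; the paper does not give a self-contained proof of this lemma but simply refers to Lemma~4.1 of \cite{SW}, where precisely this argument is carried out. One caveat worth flagging: for the stated inequality to be homogeneous in $\phi$, the norm $\|\phi\|_{\mathcal B}$ must be read as $\|\lambda_\phi\|_c^{1/2}$ (this is the convention in \cite{SW}, and is what your ``after taking square roots'' step implicitly assumes), even though the paper's displayed definition omits the exponent; under that reading your choices $r=(1-|z_0|)/2$ and $1-|z|^2\ge\tfrac34(1-|z_0|)$ actually yield the slightly sharper constant $\frac{32\sqrt2}{3\sqrt\pi}$, the quoted value $\frac{32\sqrt2}{3}=\sqrt{2048/9}$ corresponding to the cruder bound $|D(z_0,r)|\ge r^2$.
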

\begin{proof}
Examine the proof of  Lemma 4.1 in \cite{SW} carefully, we can find that this conclusion is valid.
\end{proof}

\begin{lemma}\label{Banach}
The space $\mathcal{B}(G)$ is a Banach space, and the set $\mathcal{T}(G)$ is open in $\mathcal{B}(G)$. Furthermore, there exists a constant $k$, so that the ball $B(0, k) = \{\varphi \in \mathcal{B}(G): \, \|\varphi\|_{\mathcal{B}} < k\}$ lies in $\mathcal{T}(G)$.
\end{lemma}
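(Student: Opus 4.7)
The plan is to reduce all three assertions to the classical Bers setting by first producing a continuous embedding $\mathcal{B}(G) \hookrightarrow B(G)$, analogous to the preceding lemma but for $\mathbb{D}^*$. Once this inclusion is in hand, the Banach structure of $\mathcal{B}(G)$ will be proved by a Fatou argument, while the openness of $\mathcal{T}(G)$ and the existence of the uniform ball will follow by transferring the corresponding known statements about $T(G)$ through the embedding.

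First I would record the $\mathbb{D}^*$-analogue of the preceding lemma: there is a universal constant $C$ so that $\|\varphi\|_B \leq C\|\varphi\|_{\mathcal{B}}$ for every $\varphi \in \mathcal{B}(\mathbb{D}^*)$. The proof in \cite{SW} transfers verbatim after swapping $\rho_{\mathbb{D}}$ for $\rho_{\mathbb{D}^*}$: for $z \in \mathbb{D}^*$, apply the sub-mean-value property to $|\varphi|^2$ on a hyperbolic disc centred at $z$ and bound the resulting local integral by the Carleson mass above a nearby boundary arc. Since the quadratic-differential identity is preserved by the inclusion, this yields a continuous embedding $\mathcal{B}(G) \hookrightarrow B(G)$. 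For completeness, let $\{\varphi_n\} \subset \mathcal{B}(G)$ be Cauchy; by the embedding it is Cauchy in $B(G)$, hence converges locally uniformly in $\mathbb{D}^*$ to a holomorphic limit $\varphi$ which inherits the relation $\varphi \circ g \cdot (g')^2 = \varphi$ by pointwise passage. For the Carleson property, Fatou's lemma applied to $|\varphi_n|^2(|z|^2-1)^3$ over each Carleson box $T_I$ above an arc $I$ gives
\[
\int_{T_I} |\varphi|^2 (|z|^2-1)^3\,dxdy \leq \liminf_n \int_{T_I} |\varphi_n|^2 (|z|^2-1)^3\,dxdy \leq |I|\,\sup_n \|\lambda_{\varphi_n}\|_c,
\]
and the supremum is finite since Cauchy sequences in $\mathcal{B}(G)$ are bounded. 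The same Fatou bound applied to $\varphi - \varphi_n = \lim_m(\varphi_m - \varphi_n)$ produces $\|\lambda_{\varphi - \varphi_n}\|_c \leq \liminf_m \|\lambda_{\varphi_m - \varphi_n}\|_c$, whose right-hand side tends to $0$ as $n \to \infty$ by the Cauchy property; this is the required norm convergence in $\mathcal{B}(G)$.

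For the last two claims I would use only the embedding. Given $\varphi_0 \in \mathcal{T}(G) = T(G) \cap \mathcal{B}(G)$, the openness of $T(G)$ in $B(G)$ supplies an $r > 0$ with $\{\varphi : \|\varphi - \varphi_0\|_B < r\} \subset T(G)$; pulling back through the embedding, the $\mathcal{B}(G)$-ball of radius $r/C$ about $\varphi_0$ lies in $T(G) \cap \mathcal{B}(G) = \mathcal{T}(G)$, so $\mathcal{T}(G)$ is open. For the uniform ball, $B(0,2) \subset T(G)$ in $B$-norm together with $\|\varphi\|_B \leq C\|\varphi\|_{\mathcal{B}}$ shows that $k := 2/C$ works. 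The only step that needs genuine attention is the $\mathbb{D}^*$-analogue of the preceding lemma, but this is essentially symmetric to the disc case and is not expected to present a real obstacle.
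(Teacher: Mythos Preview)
Your argument is correct and follows the same overall strategy as the paper: establish the continuous embedding $\mathcal{B}(G)\hookrightarrow B(G)$ (the $\mathbb{D}^*$-analogue of the preceding lemma) and then pull the classical facts about $T(G)$ back through it. The paper obtains that embedding by conjugating with $g(w)=1/w$ and invoking Lemma~\ref{Zinsmeister1989} rather than redoing the sub-mean-value estimate directly on $\mathbb{D}^*$; your direct route works too, but the inversion trick spares you any worry about the Carleson geometry near $\infty$.

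There are two places where the executions genuinely diverge. For completeness of $\mathcal{B}(G)$, the paper simply observes that the quadratic-differential identity is closed under locally uniform limits, so $\mathcal{B}(G)$ is a closed subspace of the (already known to be complete) space $\mathcal{B}(\mathbb{D}^*)$; your Fatou argument is a self-contained alternative that does not presuppose completeness of $\mathcal{B}(\mathbb{D}^*)$. For the ball $B(0,k)$, you stop once $\|\varphi\|_B<2$ places $\varphi$ in $T(G)$, which is exactly what the lemma asks. The paper instead writes out the Ahlfors--Weill dilatation explicitly, checks that $|\mu|^2/(1-|z|^2)$ is a Carleson measure, and uses Lemma~\ref{composition} to verify $G$-compatibility. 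This proves the stronger statement that the Ahlfors--Weill extension of any $\varphi\in B(0,k)$ has $\mu\in\mathcal{M}(G)$, not merely $\mu\in M(G)$; in effect it exhibits a holomorphic section of $S$ over a neighbourhood of the origin, anticipating Theorem~\ref{section}. Your shorter argument is sufficient for the lemma as stated, while the paper's longer computation buys this additional piece of information.
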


\begin{proof}
We consider functions $\varphi_n \in \mathcal{B}(G)$ which converge to $\varphi$ in $\mathcal{B}(\mathbb{D}^{*})$. Given a $g \in G$, we then have
$\varphi_n(z)\rightarrow \varphi(z)$, $\varphi_n(g(z))\rightarrow \varphi(g(z))$, uniformly on every compact subset of $\mathbb{D}^{*}$. It follows that
$$\varphi(g(z))g^{'}(z)^2 = \lim_{n \rightarrow \infty}\varphi_n(g(z))g^{'}(z)^2 = \lim_{n \rightarrow \infty}\varphi_n(z) = \varphi(z).$$
Consequently, $\varphi \in \mathcal{B}(G)$. Therefore, $\mathcal{B}(G)$ is a closed subspace of $\mathcal{B}(\mathbb{D}^{*})$ and hence a Banach space.

The definition of $\mathcal{T}(G)$  and Lemma \ref{unitdisk}  imply that
$$\mathcal{T}(1) = T(1)\cap \mathcal{B}(\mathbb{D}^{*}) \subset B(\mathbb{D}^{*})\cap \mathcal{B}(\mathbb{D}^{*}) =  \mathcal{B}(\mathbb{D}^{*}),$$
so $\mathcal{T}(1)$ is an open subset of $\mathcal{B}(\mathbb{D}^{*})$ by the openness of $T(1)$ in $B(\mathbb{D}^{*})$.
The definition of $\mathcal{T}(G)$ also implies that
$$\mathcal{T}(G) = \mathcal{T}(1)\cap \mathcal{B}(G) \subset \mathcal{B}(\mathbb{D}^{*})\cap \mathcal{B}(G) =  \mathcal{B}(G),$$
so $\mathcal{T}(G)$ is an open subset of Banach space $\mathcal{B}(G)$ by the openness of $\mathcal{T}(1)$ in $\mathcal{B}(\mathbb{D}^{*})$.

Now for $\varphi \in \mathcal{B}({\mathbb{D}^{*}})$, define $\phi = \varphi\circ g (g^{'})^2$, where $z = g(w) = \frac{1}{w}$, $w \in \mathbb{D}$. Then
$\|\phi\|_{B} = \|\varphi\|_{B}$. Let $\lambda_{\phi}(w) = |\phi(w)|^2(1 - |w|^2)^3$ and $\lambda_{\varphi}(z) = |\varphi (z)|^2(|z|^2 - 1)^3$  as above. then
$\lambda_{\phi}(\frac{1}{z})|\frac{1}{z}|^2 = \lambda_{\varphi}(z)$, $z \in \mathbb{D}^{*}$. So $\|\phi\|_{\mathcal{B}} \approx \|\varphi\|_{\mathcal{B}}$ by Lemma \ref{Zinsmeister1989}. According to Lemma \ref{unitdisk}, there is a constant $k$, such that $\|\varphi\|_B \leqslant \frac{2}{k}\|\varphi\|_{\mathcal{B}}$.
Suppose $\|\varphi\|_B \leqslant \frac{2}{k}\|\varphi\|_{\mathcal{B}} < 2$. Set $\mathcal{S}_{f} = \varphi$. According to Ahlfors-Weill Theorem, $f$ is univalent and can be extended to a quasiconformal mapping of the complex plane with complex dilatation
$$\mu(z) = -\frac{1}{z}\varphi(\frac{1}{\bar{z}})(1 - |z|^2)^2\frac{1}{(\bar{z})^4},\,\,z \in \mathbb{D}.$$
Lemma \ref{Zinsmeister1989} implies that
\begin{equation*}
\begin{split}
\lambda_{\varphi}(\frac{1}{z})|\frac{1}{z}|^2
& = |\varphi(\frac{1}{z})|^2(\frac{1}{|z|^2} - 1)^3\frac{1}{|z|^2}\\
& = |\varphi(\frac{1}{z})|^2(1 - |z|^2)^3\frac{1}{|z|^8}\\
\end{split}
\end{equation*}
is a Carleson measure in $\mathbb{D}$. Hence 
$$\frac{|\mu(z)|^2}{1 - |z|^2} = \frac{1}{4}|\varphi(\frac{1}{\bar{z}})|^2(1 - |z|^2)^3\frac{1}{|z|^8}$$
 is a Carleson measure in $\mathbb{D}$.
Consequently, the set $\{\varphi \in \mathcal{B}(\mathbb{D}^{*}): \, \|\varphi\|_{\mathcal{B}} < k\}$ lies in $\mathcal{T}(1)$.

For any $\varphi \in B(0, k)$, since $\mu(z)$  above can be written as
$$\mu(z) = \frac{1}{z}\varphi(z^{*})(z - z^{*})^2\gamma(\mathbb{D})_{\bar{z}}(z),$$
 where $z^{*} = \gamma(\mathbb{D})(z) = \frac{1}{\bar{z}}$. Then $\varphi\circ g (g^{'})^2 = \varphi$ implies $\mu \circ g \frac{\bar{g^{'}}}{g^{'}} = \mu$ by Lemma \ref{composition}.
So $\mu$ is a Beltrami differential for $G$. Now we can come to the conclusion that for any $\varphi \in B(0, k)$, there exists a holomorphic function $f$ in $\mathbb{D}^{*}$ which can be extended to a quasiconformal mapping in the complex plane whose complex dilatation $\mu \in \mathcal{M}(G)$ and $\mathcal{S}_{f} = \varphi$.
\end{proof}

\begin{theorem}\label{section}
The function
$$S: \mu \mapsto \mathcal{S}_{f_{\mu}|_{\Delta^{*}}}$$
which maps $\mathcal{M}(G)$ into $\mathcal{B}(G)$ is holomorphic, and it has local holomorphic sections everywhere in $\mathcal{T}(G)$.
\end{theorem}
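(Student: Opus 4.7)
The plan is to establish two claims in turn: (i) $S$ is holomorphic from the open set $\mathcal{M}(G)$ in $\mathcal{L}(\mathbb{D})\cap M(G)$ into $\mathcal{B}(G)$, and (ii) $S$ admits a local holomorphic right inverse near every $\varphi_0\in\mathcal{T}(G)$. Well-definedness $S(\mathcal{M}(G))\subset\mathcal{T}(G)\subset\mathcal{B}(G)$ is already Theorem~A.

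For (i), I would expand $\mathcal{S}_{f_\mu}|_{\mathbb{D}^*}$ as a power series in $\mu$ around a basepoint $\mu_0\in\mathcal{M}(G)$ via the classical singular integral representation of $f_\mu$ and its Schwarzian in terms of Cauchy-type kernels applied to $\mu$. Each Taylor coefficient is then a kernel integral against a fixed power of $(\mu-\mu_0)$ weighted by derivatives of $f_{\mu_0}$, and Lemma~\ref{CM} is precisely tailored to bound such operators in the Carleson part of $\|\cdot\|_{\mathcal{B}}$; the $L^\infty$ contribution to $\|\cdot\|_c$ is controlled directly. Summing gives absolute convergence in $\mathcal{B}(G)$ on a small $\mathcal{L}(\mathbb{D})$-ball around $\mu_0$, hence Fr\'echet holomorphy.

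For (ii), take $\mu_0\in\mathcal{M}(G)$ with $S(\mu_0)=\varphi_0$ and set $\Omega=f_{\mu_0}(\mathbb{D})$, $\Omega^*=f_{\mu_0}(\mathbb{D}^*)$. For $\varphi\in\mathcal{B}(G)$ near $\varphi_0$, I would define the translated quadratic differential on $\Omega^*$ by
\begin{equation*}
\tilde\varphi(w)=(\varphi-\varphi_0)\bigl(f_{\mu_0}^{-1}(w)\bigr)\bigl((f_{\mu_0}^{-1})'(w)\bigr)^2,\quad w\in\Omega^*,
\end{equation*}
apply the Ahlfors map (\ref{Ahlfors mapping}) on the quasidisk $\Omega$ to obtain a Beltrami coefficient $\mu_\Omega(\tilde\varphi)$ on $\Omega$ realizing a univalent $F$ on $\Omega^*$ with $\mathcal{S}_F=\tilde\varphi$, and then set $\sigma(\varphi)$ to be the complex dilatation on $\mathbb{D}$ of $F\circ f_{\mu_0}$ via the standard chain rule. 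The Schwarzian chain rule $\mathcal{S}_{F\circ f_{\mu_0}}=(\mathcal{S}_F\circ f_{\mu_0})(f'_{\mu_0})^2+\mathcal{S}_{f_{\mu_0}}$ gives $S(\sigma(\varphi))=\varphi$ immediately; equivariance $\sigma(\varphi)\in M(G)$ follows from Lemma~\ref{composition} together with $\gamma(g(\Omega))=g\circ\gamma(\Omega)\circ g^{-1}$; and holomorphy of $\sigma$ is clear since $\varphi\mapsto\tilde\varphi$ is bounded linear, $\tilde\varphi\mapsto\mu_\Omega(\tilde\varphi)$ is a linear-fractional expression, and the dilatation chain rule is a fixed holomorphic operation.

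The main obstacle, mirroring the analysis in Lemma~\ref{Banach}, is verifying the Carleson part of $\sigma(\varphi)\in\mathcal{M}(G)$ together with a bound $\|\sigma(\varphi)\|_c\leqslant C\|\varphi-\varphi_0\|_{\mathcal{B}}$ on a small ball. This entails three successive transfers: from $\mathbb{D}^*$ to $\Omega^*$ for $\tilde\varphi$, through the reflection on $\Omega$ for $\mu_\Omega(\tilde\varphi)$, and from $\Omega$ back to $\mathbb{D}$ under the pre-composition with $f_{\mu_0}$. All three are powered by the BMO-Teichm\"uller hypothesis on $\mu_0$: by Cui--Zinsmeister the conformal parts of $f_{\mu_0}$ have $\log$-derivative in $\mathrm{BMOA}$, so Lemma~\ref{Zinsmeister1989} handles the transfers across $f_{\mu_0}$ on either side, while the reflection step on $\Omega$ is an $\Omega$-analogue of the final computation in the proof of Lemma~\ref{Banach}, the requisite kernel bound being supplied by Lemma~\ref{CM}.
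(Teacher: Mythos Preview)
Your outline for part (ii) is essentially the same construction as the paper's: pull $\varphi-\varphi_0$ over to $\Omega^*$, apply the Ahlfors--Earle--Nag section (\ref{Ahlfors mapping}) on $\Omega$, then chain-rule back through $f_{\mu_0}$; Lemma~\ref{composition} gives $G$-equivariance, and holomorphy in $\varphi$ is clear. The difference is in how the Carleson estimate is obtained. You propose three successive transfers ($\mathbb{D}^*\to\Omega^*$, reflection on $\Omega$, $\Omega\to\mathbb{D}$) powered by Lemma~\ref{Zinsmeister1989}. The paper instead exploits the specific choice of reflection $\gamma=f_{\mu_0}\circ j\circ f_{\mu_0}^{-1}$, which yields the telescoping identity $\gamma(f_{\mu_0}(z))=f_{\mu_0}(j(z))$ for $z\in\mathbb{D}$. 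Since $j(z)\in\mathbb{D}^*$ and $f_{\mu_0}|_{\mathbb{D}^*}$ is conformal, the factors $|f'_{\mu_0}|^2$ and $\rho_{\Omega^*}^{-2}$ cancel exactly, leaving the one-line bound
\[
|\mu_\psi(f_{\mu_0}(z))|\leqslant C(\|\mu_0\|_\infty)\,|\psi(j(z))-\phi(j(z))|\,(|j(z)|^2-1)^2,
\]
so $\lambda_{\mu_\psi\circ f_{\mu_0}}\in CM(\mathbb{D})$ follows directly from $\psi-\phi\in\mathcal{B}(\mathbb{D}^*)$ without ever invoking Lemma~\ref{Zinsmeister1989} or working with Carleson measures on the quasidisk $\Omega$. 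Your middle step in particular would require an $\Omega$-analogue of Lemma~\ref{CM}, which is not immediately available; the telescoping bypasses this entirely.

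For part (i) your approach genuinely differs. You propose a direct power-series expansion of $\mathcal{S}_{f_\mu}$ around $\mu_0$ with Carleson-norm control on each homogeneous term via Lemma~\ref{CM}. The paper instead separates the two ingredients: it first proves \emph{continuity} of $S:\mathcal{M}(G)\to\mathcal{B}(G)$ using the Astala--Zinsmeister pointwise inequality for $|S(\nu)-S(\mu)|^2$ together with a single application of Lemma~\ref{CM}, and then upgrades continuity to holomorphy by the standard infinite-dimensional criterion (a continuous map that is weakly holomorphic against a total set in the dual is holomorphic), taking point evaluations $l_z(\phi)=\phi(z)$ as the total set and using classical holomorphic dependence of $f_\mu$ on parameters. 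This avoids having to bound the full Taylor series term-by-term in $\|\cdot\|_{\mathcal{B}}$; your route would require iterating Lemma~\ref{CM} through multilinear kernels with uniform geometric control, which is feasible but considerably more work than the paper's argument.
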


\begin{proof}
We first show that $S: \mathcal{M}(G)\rightarrow \mathcal{B}(G)$ is continuous. We borrow some discussion from Astala and Zinsmeister \cite{AZ}. By an integral representation of the Schwarzian derivative by means of the representation theorem of quasiconformal mappings, Astala and Zinsmeister \cite{AZ} proved that for any two elements $\mu$ and $\nu$ in $M(\mathbb{D})$, there exists some constant $C_1(\|\mu\|_{\infty})$ such that 
$$|S(\nu)(z) - S(\mu)(z)|^{2} \leq \frac{C_1(\|\mu\|_{\infty})}{(|z|^2 - 1)^2} \iint_{\mathbb{D}}\frac{|\nu(\zeta) - \mu(\zeta)|^2 + \|\nu - \mu\|_{\infty}^2|\mu(\zeta)|^2}{|\zeta - z|^4}d\xi d\eta. $$
Then, 
\begin{equation*}
\begin{split}
 |S(\nu)(z) - S(\mu)(z)|^{2}&(|z|^2 - 1)^3 \\
& \leq C_1(\|\mu\|_{\infty})\Big( \iint_{\mathbb{D}}\frac{|\nu(\zeta) - \mu(\zeta)|^2}{1 - |\zeta|^2} \frac{(1 - |\zeta|^2)( |z|^2 - 1)}{|\zeta - z|^4} d\xi d\eta \\
& + \|\nu - \mu\|_{\infty}^2 \iint_{\mathbb{D}}\frac{|\mu(\zeta)|^2}{1 - |\zeta|^2}\frac{(1 - |\zeta|^2)(|z|^2 - 1)}{|\zeta - z|^4} d\xi d\eta \Big).\\
\end{split}
\end{equation*}
According to Lemma \ref{CM}, there exists some constant $C_2(\|\mu\|_{\infty})$ such that
\begin{equation*}
\begin{split}
\|S(\nu) - S(\mu)\|_{\mathcal {B}}^2  & \leq  C_2(\|\mu\|_{\infty})(\|\lambda_{\nu - \mu}\|^2_c + \|\nu - \mu\|_{\infty}^2\|\lambda_{\mu}\|^2_c)\\
& \leq  C_2(\|\mu\|_{\infty})(1 + \|\mu\|_c^2)\|\nu - \mu\|_c^2.\\
\end{split}
\end{equation*}
Consequently, $S: \mathcal {M}(G) \rightarrow \mathcal {B}(G)$ is continuous.

To prove that $S: \mathcal{M}(G)\rightarrow \mathcal{B}(G)$ is a holomorphic map, we use a general result about the infinite dimensional holomorphy (see \cite{Le, Na88}). It says that a continuous map $f$ from a domain $U$ in a complex Banach space $X$ into another complex Banach space $Y$ is holomorphic if for each pair $(u, x)$ in $U \times X$ and each element $y^{*}$ from a total subset $Y_{0}^{*}$ of the dual space $Y^{*}$, $y^{*}(f(u + tx))$ is a holomorphic function in a neighborhood of zero in the complex plane. Here a subset $Y_{0}^{*}$ of  $Y^{*}$ is total if $y^{*}(y) = 0$ for all $y^{*} \in Y_{0}^{*}$ implies that $y = 0$.

We have already seen from Lemma \ref{Banach} that $\mathcal{B}(G)$ is a Banach space. The ball $\mathcal{M}(G)$ is an open subset of the Banach space $\mathcal{L}(G)$. Now for each $z \in \mathbb{D}^{*}$, define $l_z(\phi) = \phi(z),\; \forall \phi \in \mathcal {B}(G).$ The proof of Lemma \ref{Banach} says that 
$|\phi(z)|(|z|^2 - 1)^2 \leq C\|\phi\|_{\mathcal {B}},$ 
which implies that 
$\|l_z\| \leq C(|z|^2 - 1)^{-2}.$ Thus, 
$l_z \in \mathcal {B}^{*}(G)$.
Set $A = \{l_z: z \in \mathbb{D}^{*}\}.$ Clearly, A is a total subset of 
$\mathcal {B}^{*}(G)$. Now for each $ z \in \mathbb{D}^{*}$, each pair $(\mu, \nu) \in \mathcal {M}(G) \times \mathcal {L}(G)$ and small $t$ in the complex plane, by the well known holomorphic dependence of quasiconformal mappings on parameters (see \cite{Ah66, Le, Na88}), we conclude that
$l_z(S(\mu + t\nu)) = S(\mu + t\nu)(z)$ is a holomorphic function of $t$. Consequently, 
$S: \mathcal {M}(G) \rightarrow \mathcal {B}(G)$ is holomorphic.

Finally, we prove $S: \mathcal{M}(G) \rightarrow \mathcal{B}(G)$ has local holomorphic sections everywhere in $\mathcal{T}(G)$. Fix $\phi \in \mathcal{T}(G)$. There exists a univalent function $f$ on $\mathbb{D}^{*}$ such that $\mathcal{S}_f = \phi$. Let $F$ be a Riemann mapping from $\mathbb{D}$ onto $\mathbb{C}\setminus\overline{f(\mathbb{D}^{*})}$ and $h = F^{-1}\circ f$ the conformal welding. Then we have $h \in SQS(\mathbb{S})$. Now the proof in \cite[P.199]{Le} gives that $h \in SQS(G)$. Let $f_{\mu} = F \circ E(h)$. Then $f_{\mu}$ is a quasiconformal extension of $f$ whose dilatation $\mu$ is in 
$\mathcal{M}(G)$. Thus  $S(\mu) = \phi$. Let $\Omega = f_{\mu}(\mathbb{D})$ and $\Omega^{*} = f_{\mu}(\mathbb{D}^{*})$. We have  
$\rho_{\Omega^{*}} (f_{\mu}(z))|f_{\mu}^{'}(z)| = (|z|^2 - 1)^{-1}$ for $z \in \mathbb{D}^{*}$.
Consider $U_{\epsilon}(\phi) = \{\psi \in \mathcal{B}(G): \|\psi - \phi\|_{\mathcal{B}} <  \epsilon\}$ for $\epsilon > 0$.
Then for each $\psi \in U_{\epsilon}(\phi)$ there exists a unique locally univalent function $f_{\psi}$ in $\mathbb{D}^{*}$ with $f_{\mu}(z) = z + \frac{b_{1}}{z} + \cdots$ as $z \rightarrow  \infty$
such that $\mathcal{S}_{f_{\psi}} = \psi$. 
Set $g_{\psi} = f_{\psi}\circ(f_{\mu})^{-1}$. Then $\mathcal{S}_{g_{\psi}} = ((\psi - \phi) \circ f_{\mu}^{-1})((f_{\mu}^{-1})^{'})^2$ and 
$\sup_{z \in \Omega^{*}} \rho_{\Omega^{*}}^{-2}(z) |\mathcal{S}_{g_{\psi}}(z)| = \|\psi - \phi\|_{B}$.  
Since $\psi, \phi \in \mathcal{B}(G)$, $\psi \circ g g^{' 2} = \psi$ and $\phi \circ g g^{' 2} = \phi$ for any $g \in G$. Hence, 
\begin{equation}\label{Schwarz}
\mathcal{S}_{g_{\psi}} \circ \widetilde{g} (\widetilde{g})^{'2} = \mathcal{S}_{g_{\psi}}, 
\end{equation}
for any 
$\widetilde{g} \in G_{\mu}$.
Since the inclusion map $i: \mathcal{B}(G) \rightarrow B(G)$ is continuous,  $\|\psi - \phi\|_{B} < C\epsilon$ for $\psi \in U_{\epsilon}(\phi)$.  
The Earle-Nag reflection \cite[P.263]{GL} associated with the curve $\Gamma = \partial \Omega$ is given by the formula 
$$
\gamma(z) = \begin{cases}
f \circ j \circ f_{\mu}^{-1}(z) = f \circ j \circ E(h)^{-1}\circ F^{-1}, &  z \in \Omega\\
z, & z \in \Gamma\\
\gamma^{-1}(z), & z \in \Omega^{*}
\end{cases}
$$
where $j(z) = 1/\bar{z}$, and \cite[P.265]{GL}  says 
\begin{equation}\label{gamma}
C_3^{-1}(\|\mu\|_{\infty}) \leqslant |\gamma(z) - z|^2 \rho_{\Omega^{*}}^{-2} (\gamma(z)) |\bar{\partial} \gamma(z)| \leqslant C_3(\|\mu\|_{\infty}).
\end{equation}
Under the condition that $\sup_{z \in \Omega^{*}} \rho_{\Omega^{*}}^{-2}(z) |\mathcal{S}_{g_{\psi}}(z)|$ is sufficiently small (when $\epsilon$ is sufficiently small),   Ahlfors \cite{Ah66}, Earle and Nag \cite{EN} (see also \cite[P.266]{GL}) proved that $g_{\psi}$ is univalent and can be extended to a quasiconformal mapping in the whole plane whose complex dilatation $\mu_{\psi}$ has the form
\begin{equation}\label{Omega}
\mu_{\psi}(z) = \frac{\mathcal{S}_{g_{\psi}}(\gamma (z))(\gamma (z) - z)^2 \bar{\partial} \gamma (z)}{2 + \mathcal{S}_{g_{\psi}}(\gamma (z))(\gamma (z) - z)^2 \partial{\gamma}(z)},    \, \, z \in \Omega.
 \end{equation}
Then by means of (\ref{gamma}) we have
\begin{equation} \label{mu}
|\mu_{\psi}(z)| \leqslant C_{4}(\|\mu\|_{\infty}) |\mathcal{S}_{g_{\psi}}(\gamma (z))| \rho_{\Omega^{*}}^{-2}(\gamma (z)) , \,\, z \in \Omega.
\end{equation}
Consequently, $f_{\psi} = g_{\psi} \circ f_{\mu}$ is univalent in $\mathbb{D}^{*}$ and has a quasiconformal extension to the whole plane whose complex dilatation $\nu_{\psi}$ is
\begin{equation}\label{nu}
\nu_{\psi} = \frac{\mu + (\mu_{\psi} \circ f_{\mu})\tau}{1 + \bar{\mu}(\mu_{\psi} \circ f_{\mu})\tau}, \,\,\, \tau = \frac{\overline{\partial f_{\mu}}}{\partial f_{\mu}}.
\end{equation}
According to Lemma \ref{composition}, (\ref{Schwarz}) implies that
$$\mu_{\psi} \circ \widetilde{g} \frac{\overline{(\widetilde{g})^{'}}}{(\widetilde{g})^{'}} = \mu_{\psi}.$$
Again we know that $\mu \circ g \frac{\bar{g^{'}}}{g^{'}} = \mu$. By direct computation, we can get 
\begin{equation}\label{g}
\nu_{\psi} \circ g \frac{\bar{g^{'}}}{g^{'}} = \nu_{\psi}.
\end{equation}
Now, it follows from (\ref{mu}) that
\begin{equation}
\begin{split}
|\mu_{\psi}(f_{\mu}(z))| & \leqslant C_4(\|\mu\|_{\infty}) |\mathcal{S}_{g_{\psi}} (\gamma (f_{\mu}(z)))| \rho_{\Omega^{*}}^{-2}( \gamma (f_{\mu}(z))) \\
& =  C_4(\|\mu\|_{\infty}) |\mathcal{S}_{g_{\psi}} (f_{\mu} (j(z)))| \rho_{\Omega^{*}}^{-2}( \gamma (f_{\mu}(z))) \\
& =  C_4(\|\mu\|_{\infty})  |\psi(j(z)) - \phi (j(z))| (1 - |j(z)|^2)^2 ,\\
\end{split}
\end{equation}
which implies that $\|\lambda_{\mu_{\psi}\circ f_{\mu}} \|_{c} \leqslant C_5 (\|\mu\|_{\infty}) \|\psi - \phi\|_{\mathcal{B}}$. 
Thus, $\mu_{\psi}\circ f_{\mu} \in \mathcal{M}(\mathbb{D})$,  and we conclude by (\ref{nu}) that $\nu_{\psi} \in \mathcal{M}(\mathbb{D})$.
Combining with (\ref{g}), we can get $\nu_{\psi} \in \mathcal{M}(G)$.
On the other hand, from (\ref{Omega}) and (\ref{nu}) it is easy to see that $\nu_{\psi}$ depends holomorphically  on $\psi$.
Since $S(\nu_{\psi}) = \psi$, we conclude that $\nu: U_{\epsilon}(\phi) \rightarrow \mathcal{B}(G)$ is a local holomorphic section to $S: \mathcal{M}(G) \rightarrow \mathcal{B}(G).$ This completes the proof.
\end{proof}

\begin{corollary}\label{Bers}
Via the Bers embedding $\beta$, $\mathcal{T}_S$ carries a natural complex structure. Under this complex structure, the Bers embedding 
$$\beta:  \mathcal{T}_S \rightarrow \mathcal{T}(G)$$
is biholomorphic.
\end{corollary}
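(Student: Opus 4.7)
The plan is to install the complex structure on $\mathcal{T}_S$ by direct transport along the bijection $\beta:\mathcal{T}_S\to \mathcal{T}(G)$ of Theorem A, and then to use Theorem \ref{section} to verify that this structure is canonical. Concretely, Lemma \ref{Banach} exhibits $\mathcal{T}(G)$ as an open subset of the complex Banach space $\mathcal{B}(G)$, so declaring $\beta$ to be a single global chart equips $\mathcal{T}_S$ with the structure of a complex Banach manifold modelled on $\mathcal{B}(G)$. Under this definition $\beta$ is tautologically biholomorphic, and the only substantive task is to establish naturality.

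Naturality I interpret, following \cite{SW}, as the requirement that the Teichm\"uller projection $\pi_T:\mathcal{M}(G)\to \mathcal{T}_S$, $\mu\mapsto [\mu]$, be holomorphic and admit local holomorphic sections. On one hand, since the fibres of $S=\beta\circ \pi_T$ coincide with the equivalence classes of $\sim$ (the content of Theorem A), the factorisation $\pi_T=\beta^{-1}\circ S$ is well-defined, and Theorem \ref{section} together with biholomorphy of $\beta$ yields holomorphy of $\pi_T$. On the other hand, for each $\phi_0\in \mathcal{T}(G)$ the local holomorphic section $\nu:U_\epsilon(\phi_0)\to \mathcal{M}(G)$ of $S$ built in Theorem \ref{section} satisfies $S\circ \nu=\mathrm{id}$, which unravels to $\pi_T\circ \nu=\beta^{-1}$ on $U_\epsilon(\phi_0)$. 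This simultaneously exhibits $\nu$ as a local holomorphic section of $\pi_T$ and, in any alternative complex structure on $\mathcal{T}_S$ making $\pi_T$ holomorphic, forces $\beta^{-1}$ to be holomorphic, pinning the transported structure down as the unique natural one.

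Because Theorem \ref{section} has already done all the analytic work (holomorphy of $S$ and existence of the section $\nu$), I anticipate no genuine obstacle; the argument is essentially formal. The only subtleties are recording that the transition functions of the atlas are vacuous (the chart is global) and spelling out the factorisation $\pi_T=\beta^{-1}\circ S$ together with the identity $\pi_T\circ \nu=\beta^{-1}$, both of which are immediate consequences of Theorem A and Theorem \ref{section}.
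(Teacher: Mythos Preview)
Your proposal is correct and follows essentially the same route as the paper: both transport the complex structure from the open set $\mathcal{T}(G)\subset \mathcal{B}(G)$ to $\mathcal{T}_S$ via the bijection $\beta$ of Theorem A, with all analytic content supplied by Theorem \ref{section}. The only difference in emphasis is that the paper first checks $\beta$ is a homeomorphism for the quotient topology on $\mathcal{T}_S$ (continuity of $\beta$ from continuity of $S$ and the universal property of the quotient, continuity of $\beta^{-1}$ from the local section $\nu$ composed with $\Phi$) before declaring the complex structure, whereas you declare the chart first and recover compatibility with the quotient through the factorisation $\pi_T=\beta^{-1}\circ S$; your added uniqueness remark and the verification that $\pi_T$ has local holomorphic sections in fact anticipate the paper's next corollary.
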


\begin{proof}
First of all, $\beta: \mathcal{T}_S \rightarrow \mathcal{T}(G)$  is one-to-one by Theorem A.

Next, $S: \mathcal{M}(G) \rightarrow \mathcal{B}(G)$ is continuous by Theorem \ref{section}  and  
$\Phi: \mu \mapsto [\mu]$ which maps $\mathcal{M}(G)$ onto $\mathcal{T}_S$ is a projection. 
Thus, $\beta: \mathcal{T}_S \rightarrow \mathcal{B}(G)$ is continuous by the topological property of the projection mapping. 
We fix a point $\phi \in \mathcal{T}(G)$, consider $U_{\epsilon}(\phi) = \{\psi \in \mathcal{B}(G): \|\psi - \phi\|_{\mathcal{B}} <  \epsilon\}$ for $\epsilon > 0$.
According to the proof of Theorem \ref{section}, there exists a mapping $\nu: \psi \rightarrow \nu_{\psi}$ which maps $U_{\epsilon}(\phi)$ into $\mathcal{M}(G)$, and it is continuous. The projection  
$\Phi: \mathcal{M}(G) \rightarrow \mathcal{T}_S$  is also continuous. So the composition $\beta^{-1} = \Phi \circ \nu:  U_{\epsilon}(\phi) \rightarrow \mathcal{T}_S$ is continuous. Thus, $\beta: \mathcal{T}_S \rightarrow \mathcal{T}(G)$ is a homeomorphism,  i.e.,  $\mathcal{T}_S$ is actually homeomorphic to an open subset in the complex Banach space $\mathcal{B}(G)$. 

Hence, by using this result, we define on $\mathcal{T}_S$ a complex analytic Banach manifold structure.  The Bers embedding  
$\beta: \mathcal{T}_S \rightarrow \mathcal{B}(G)$ is biholomorphic under this complex structure.
\end{proof}

\begin{corollary}
The canonical projection
$$\Phi: \mathcal{M}(G) \rightarrow \mathcal{T}_S$$
is holomorphic, and it has local holomorphic sections everywhere in $\mathcal{T}_S$.
\end{corollary}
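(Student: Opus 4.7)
The plan is to reduce everything to the two results already established: Theorem \ref{section} gives the holomorphy of $S:\mathcal{M}(G)\to\mathcal{B}(G)$ together with local holomorphic sections over $\mathcal{T}(G)$, and Corollary \ref{Bers} tells us that the Bers embedding $\beta:\mathcal{T}_S\to\mathcal{T}(G)$ is a biholomorphism for the complex structure just defined on $\mathcal{T}_S$. The key observation is the factorization
\begin{equation*}
\beta\circ\Phi = S,
\end{equation*}
which is immediate from the definitions $\beta([\mu])=\mathcal{S}_{f_\mu|_{\mathbb{D}^*}}$ and $S(\mu)=\mathcal{S}_{f_\mu|_{\mathbb{D}^*}}$. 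Equivalently, $\Phi=\beta^{-1}\circ S$.

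From here holomorphy of $\Phi$ is immediate: $S:\mathcal{M}(G)\to\mathcal{B}(G)$ is holomorphic by Theorem \ref{section} (and its image lies inside the open set $\mathcal{T}(G)\subset\mathcal{B}(G)$ because for every $\mu\in\mathcal{M}(G)$ one has $\lambda_{\mathcal{S}_{f_\mu}}\in CM(\mathbb{D}^*)$ by Theorem A), and $\beta^{-1}:\mathcal{T}(G)\to\mathcal{T}_S$ is holomorphic by Corollary \ref{Bers}. The composition of two holomorphic maps between complex Banach manifolds is holomorphic, so $\Phi$ is holomorphic.

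For the local sections, fix $[\mu_0]\in\mathcal{T}_S$ and set $\phi_0=\beta([\mu_0])\in\mathcal{T}(G)$. By the second assertion of Theorem \ref{section} there exist $\epsilon>0$ and a holomorphic map $\nu:U_\epsilon(\phi_0)\to\mathcal{M}(G)$ such that $S\circ\nu=\mathrm{id}_{U_\epsilon(\phi_0)}$. Define
\begin{equation*}
\sigma = \nu\circ\beta : V \longrightarrow \mathcal{M}(G), \qquad V = \beta^{-1}(U_\epsilon(\phi_0)),
\end{equation*}
which is an open neighborhood of $[\mu_0]$ in $\mathcal{T}_S$. As a composition of holomorphic maps, $\sigma$ is holomorphic, and
\begin{equation*}
\Phi\circ\sigma = \beta^{-1}\circ S\circ\nu\circ\beta = \beta^{-1}\circ\beta = \mathrm{id}_V,
\end{equation*}
so $\sigma$ is the desired local holomorphic section of $\Phi$ at $[\mu_0]$. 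There is no genuine obstacle left at this stage; the only thing one should be careful about is to notice that Theorem \ref{section} actually produces sections of $S$ over all of $\mathcal{T}(G)$ (not merely over the image of some particular basepoint), so that, via the biholomorphism $\beta$, sections of $\Phi$ are obtained over every point of $\mathcal{T}_S$.
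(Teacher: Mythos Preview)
Your proof is correct and follows essentially the same approach as the paper: factor $\Phi=\beta^{-1}\circ S$, deduce holomorphy from Theorem \ref{section} and Corollary \ref{Bers}, and obtain local sections by pulling back the sections $\nu$ of $S$ through $\beta$ to get $\sigma=\nu\circ\beta$. Your write-up is simply a more detailed version of the paper's argument.
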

\begin{proof}
First of all, we have 
$$\Phi = \beta^{-1} \circ S.$$
Since $\beta^{-1}$ and $S$ are holomorphic, it follows that $\Phi$ is holomorphic.

Next, let us consider the mapping 
$\Phi \circ \nu \circ \beta $
which is the identity on $ \beta^{-1}(U_{\epsilon}(\phi)).$
According to the proof of Theorem \ref{section}, $\nu \circ \beta$ is the desired section.
\end{proof}

\begin{corollary}
$\Psi: \mathcal{T}_S \rightarrow SQS(G)$ is a homeomorphism. Consequently, $SQS(G)$ possesses a complex structure so that $\Psi: \mathcal{T}_S \rightarrow SQS(G)$ 
is a biholomorphic isomorphism.
\end{corollary}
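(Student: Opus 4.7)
The plan is to prove the homeomorphism statement; once bicontinuity is in hand, the complex structure on $SQS(G)$ can simply be declared to be the push-forward via $\Psi$ of the complex structure on $\mathcal{T}_S$ from Corollary \ref{Bers}, so that $\Psi$ becomes biholomorphic by construction. By Theorem A, $\Psi$ is already bijective; by Corollary \ref{Bers} the topology on $\mathcal{T}_S$ is the one pulled back via $\beta$ from the open subset $\mathcal{T}(G) \subset \mathcal{B}(G)$, while $SQS(G)$ carries the BMO topology.

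For continuity of $\Psi$, I would compose with $\beta^{-1}$ and show $\Psi \circ \beta^{-1} : \mathcal{T}(G) \to SQS(G)$ is continuous. For each $\phi_0 \in \mathcal{T}(G)$, the local holomorphic section $\nu: U_\epsilon(\phi_0) \to \mathcal{M}(G)$ of $S$ from Theorem \ref{section} satisfies $\Phi \circ \nu = \beta^{-1}|_{U_\epsilon(\phi_0)}$, whence $\Psi \circ \beta^{-1}(\psi) = f^{\nu(\psi)}|_{\mathbb{S}}$. The holomorphicity of $\nu$ gives continuity into $(\mathcal{M}(G), \|\cdot\|_c)$, and a standard BMO--Teichm\"uller continuity estimate---bounding $\|\log(f^{\mu_1})'-\log(f^{\mu_2})'\|_{\mathrm{BMO}(\mathbb{S})}$ by $\|\mu_1-\mu_2\|_c$ via the integral representation of quasiconformal mappings and Lemma \ref{CM}---makes $\mu \mapsto f^\mu|_{\mathbb{S}}$ continuous from $(\mathcal{M}(\mathbb{D}), \|\cdot\|_c)$ into $(SQS_*(\mathbb{S}), \mathrm{BMO})$. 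Composition yields continuity.

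For continuity of $\Psi^{-1}$, it is equivalent to show continuity of $T := \beta \circ \Psi^{-1} : (SQS(G), \mathrm{BMO}) \to (\mathcal{T}(G), \|\cdot\|_{\mathcal{B}})$, $h \mapsto \mathcal{S}_{f_\mu|_{\mathbb{D}^*}}$. Fix $h_0 \in SQS(G)$, let $\mu_0 = e(h_0) \in \mathcal{M}(G)$ (Douady--Earle extension, well-defined by Cui--Zinsmeister) and $\phi_0 = S(\mu_0) = T(h_0)$, and use the local holomorphic section $\nu$ at $\phi_0$ from Theorem \ref{section}, which by construction satisfies $\nu(\phi_0) = \mu_0$. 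The Ahlfors--Weill formulas (\ref{Omega})--(\ref{nu}) combined with Lemmas \ref{CM} and \ref{Zinsmeister1989} yield $\|\nu(\psi) - \mu_0\|_c \leq C \|\psi - \phi_0\|_{\mathcal{B}}$ on $U_\epsilon(\phi_0)$; together with continuity of the map $\mu \mapsto f^\mu|_{\mathbb{S}}$ above, this realizes $F_{\phi_0}: \psi \mapsto f^{\nu(\psi)}|_{\mathbb{S}}$ as a BMO-continuous injection of a $\mathcal{B}$-neighborhood of $\phi_0$ into $SQS(G)$, passing through $h_0$ at $\phi_0$, whose set-theoretic inverse is $T$ restricted locally.

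The main obstacle is then BMO-openness of the image $F_{\phi_0}(U_\epsilon(\phi_0))$ at $h_0$, i.e.\ the reverse estimate $\|T(h) - \phi_0\|_{\mathcal{B}} \leq C \|h - h_0\|_{\mathrm{BMO}}$ for $h$ near $h_0$. A direct route through global continuity of the Douady--Earle operator $e$ is unavailable---the introduction flags this as an open question---so the reverse estimate must be extracted from the Cui--Zinsmeister Carleson characterization of $SQS(\mathbb{S})$ together with the Shen--Wei analysis of the Ahlfors--Weill representation \cite{SW}, exploiting the fact that $F_{\phi_0}$ and $T$ are locally inverse bijections.
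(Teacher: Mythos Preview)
Your proposal leaves the hard direction unfinished: you yourself flag the ``main obstacle'' as the reverse estimate $\|T(h)-\phi_0\|_{\mathcal B}\le C\|h-h_0\|_{\mathrm{BMO}}$ and then only gesture at where it ``must be extracted from.'' That is not a proof; it is a restatement of the difficulty. The forward direction is also not fully justified---the estimate $\|\log(f^{\mu_1})'-\log(f^{\mu_2})'\|_{\mathrm{BMO}}\lesssim\|\mu_1-\mu_2\|_c$ that you call ``standard'' is precisely the nontrivial content of the Shen--Wei paper \cite{SW}, not something that drops out of Lemma~\ref{CM} alone.

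The paper sidesteps all of this with a one-line observation you have missed: Shen and Wei \cite{SW} already prove that $\Psi$ is a homeomorphism in the case $G=\{I\}$. Now $\mathcal T_S$ and $SQS_*(G)$ sit inside the universal spaces $\mathcal T_{\{I\}}$ and $SQS_*(\mathbb S)$ with the subspace topologies (for $\mathcal T_S$ this is because $\mathcal B(G)$ is a closed subspace of $\mathcal B(\mathbb D^*)$ with the same norm and $\mathcal T(G)=\mathcal T(1)\cap\mathcal B(G)$; for $SQS_*(G)$ it is immediate from the definition). Since $\Psi$ for $G$ is the restriction of the universal $\Psi$ and is bijective by Theorem~A, the restriction of a homeomorphism to subspaces is a homeomorphism. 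The complex-structure statement then follows exactly as you say, by pushing forward via $\beta\circ\Psi^{-1}$. So rather than re-deriving the Shen--Wei estimates inside a local-section framework, simply invoke their theorem and restrict.
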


\begin{proof}
We know that $\Psi: \mathcal{T}_S \rightarrow SQS(G)$ is one-to-one by Theorem A. Shen and Wei \cite{SW} have proved that $\Psi: \mathcal{T}_S \rightarrow SQS(G)$ 
is a homeomorphism when $G = \{I\}$. So, $\Psi: \mathcal{T}_S \rightarrow SQS(G)$ is a homeomorphism for any Fuchsian group $G$. The homeomorphism $\beta \circ \Psi^{-1}: SQS(G) \rightarrow \mathcal{T}(G)$ endows the spaces 
$SQS(G)$ with the structure of complex Banach manifolds modeled on the Banach space $\mathcal{B}(G)$. Naturally, under this complex structure, $\Psi: \mathcal{T}_S \rightarrow SQS(G)$ is a biholomorphic isomorphism.
\end{proof}

\section{G\^ateaux-differentiability of $e$ at the origin}
In this section, we will show that the Douady-Earle extension operator $e: SQS(\mathbb{S}) \to \mathcal{M}(\mathbb{D})$ is  G\^ateaux-differentiable at the origin, and  compute its differential.

Let $E(h)$ denote the Douady-Earle extension of a quasisymmetric homeomorphism $h$ on $\mathbb{S}$. 
The definition of $E(h)$ is very simple:  given $z \in \mathbb{D}$, $E(h)(z)$ is the unique $w \in \mathbb{D}$  such that 
\begin{equation*}
F_h(z, w) = \frac{1}{2\pi} \int_{0}^{2\pi} \frac{h(e^{iu}) - w}{1 - \bar{w}h(e^{iu})}\frac{1 - |z|^2}{|z - e^{iu}|^2} du = 0.
\end{equation*}
Coming to BMO-Teichm\"uller theory, Cui and Zinsmeister \cite{CZ} have shown that if $h \in SQS(\mathbb{S})$ then, if $\mu$ denotes the complex dilatation of the Douady-Earle extension $E(h)$, it holds that 
\begin{equation*}
\frac{|\mu|^2(z)}{1 - |z|} dxdy
\end{equation*}
is a Carleson measure in $\mathbb{D}$, which implies that the Douady-Earle extension operator $h \mapsto  \mu$ is a bijection from $SQS(\mathbb{S})$ onto its image in $\mathcal{M}(\mathbb{D})$. Lemma \ref{CM} combined with \cite[Proposition 4.2]{Se} shows continuity at $h = Id$  of the operator, but global continuity is still unknown. 

 Suppose $X$ and $Y$ are Banach spaces, $U \subset X$ is open, and $G: X \to Y$. The differential $dG(u; b)$ of $G$ at $u \in U$ in the direction $b \in X$ is defined as 
\begin{equation*}
dG(u; b) = \lim_{t \to 0}\frac{G(u + tb) - G(u)}{t} = \frac{d}{d t} G(u + tb)|_{t = 0}.
\end{equation*}
If the limit exists for all $b \in X$, and if $dG(u; \cdot): X \to Y$ is linear and continuous, then one says that $G$ is G\^ateaux differentiable at the point $u$.

In the following we show that the operator $e: \,h \mapsto  \mu$ from $SQS(\mathbb{S})$ onto its image in $\mathcal{M}(\mathbb{D})$ is G\^ateaux differentiable at the origin. In order to make this statement precise we first write,  for $h \in SQS(\mathbb{S})$, $h(e^{it}) = e^{i\phi(t)}$, where $\phi$ is an increasing homeomorphism of $[0, 2\pi]$ such that $\phi^{'}(t)$ is a Muckenhoupt weight. The topology of $SQS(\mathbb{S})$ is the one inherited from $BMO_{\mathbb{R}}(\mathbb{S})$ the space of real-valued  $2\pi$-periodic BMO functions:
\begin{equation*}
d(h_1, h_2) = \|\log \phi_1^{'} - \log \phi_2^{'}\|_{BMO}.
\end{equation*}
Put $b(t) = \log \phi^{'}(t)$. There is no loss of generality to assume that $\int_0^{2\pi} b(t) dt = 0$ (since any $\varphi \in BMO$ can be identified with $\varphi + \alpha$,  $\alpha $ constant).

Conversely, suppose now that $b \in BMO_{\mathbb{R}}(\mathbb{S})$ with $\int_0^{2\pi} b(t) dt = 0$. Let $t > 0$ be small. We define $c(t)$ as being the unique real number such that 
\begin{equation*}
\int_0^{2\pi} e^{t b(u) - t c(t)} du = 2\pi.
\end{equation*}
That is 
\begin{equation*}
c(t) = \log[(\frac{1}{2\pi}\int_0^{2\pi}e^{t b(u)} du)^{\frac{1}{t}}].
\end{equation*}
We have $\lim_{t \to 0} c(t) = \frac{1}{2\pi}\int_0^{2\pi}b(u) du = 0$. Hence we can write $h_t(e^{iu}) = e^{i \phi_t(u)}$, where 
\begin{equation*}
\begin{split}
\phi_t(u) &= \int_0^{u} e^{t b(v) - t c(t)} dv = u + \int_0^u [e^{t b(v) - t c(t)} - 1]dv\\
& = u + t\int_0^{u} b(v) dv + o(t).\\
\end{split}
\end{equation*}
Set $B(u) = \int_0^{u} b(v) dv$. Then
\begin{equation}\label{ht}
h_t(e^{iu}) = e^{i \phi_t(u)} = e^{iu} + t i e^{iu} B(u) + o(t).
\end{equation}

For $h_t \in SQS(\mathbb{S})$, set $F_{h_t}(z, w) = 0$. That is 
\begin{equation*}
F(t, z, w) = \frac{1}{2\pi} \int_{0}^{2\pi} \frac{h_t(e^{iu}) - w}{1 - \bar{w}h_t(e^{iu})}\frac{1 - |z|^2}{|z - e^{iu}|^2}du = 0.
\end{equation*}
Write $w = E(h_t)(z) = f_t (z)$ and denote by $\mu_t = (f_t)_{\bar{z}}/(f_t)_{z}$ the complex dilatation of $f_t$.

\begin{theorem}
The Douady-Earle extension operator $e: \,h \mapsto \mu$ from $SQS(\mathbb{S})$ onto its image in $\mathcal{M}(\mathbb{D})$ is G\^ateaux differentiable at the origin, and the differentiate at the origin of  $e$ in the direction $b \in BMO_{\mathbb{R}}(\mathbb{S})$ is 
\begin{equation*}
de(0; b) =  - \frac{(1 - |z|^2)^2}{2\pi i} \int_0^{2\pi} \frac{3 e^{2iu}B(u)}{(1 - \bar{z} e^{iu})^4} du.
\end{equation*}
\end{theorem}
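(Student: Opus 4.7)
The plan is to differentiate implicitly the defining relation $F(t, z, f_t(z)) = 0$ at $t = 0$. Since $f_0 = E(\mathrm{Id}) = \mathrm{Id}$, we have $(f_0)_z = 1$, $(f_0)_{\bar z} = 0$ and $\mu_0 = 0$; differentiating the identity $\mu_t (f_t)_z = (f_t)_{\bar z}$ at $t = 0$ then gives
\begin{equation*}
de(0; b)(z) = \partial_t \mu_t(z)|_{t=0} = \partial_{\bar z} \dot f(z), \qquad \dot f := \partial_t f_t|_{t=0}.
\end{equation*}
So the task reduces to solving the implicit equation for $\dot f$ and then taking its $\bar z$-derivative.

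Differentiating $F(t, z, f_t(z)) = 0$ in $t$ at $t = 0$ yields the linear relation
\begin{equation*}
F_t + F_w\, \dot f + F_{\bar w}\,\overline{\dot f} = 0 \quad \text{at } (t, w) = (0, z).
\end{equation*}
Each of $F_t, F_w, F_{\bar w}$ is an integral over $[0, 2\pi]$ whose integrand, after substituting $|e^{iu}-z|^2 = (e^{iu}-z)(1-\bar z e^{iu})/e^{iu}$, becomes a rational function of $\zeta = e^{iu}$ whose only singularity away from $\zeta = z$ sits at $\zeta = 1/\bar z \notin \overline{\mathbb{D}}$. Residue calculus then delivers
\begin{equation*}
F_{\bar w}|_{t=0,\, w=z} = 0, \qquad F_w|_{t=0,\, w=z} = -\frac{1}{1 - |z|^2}.
\end{equation*}
Using the expansion $h_t(e^{iu}) = e^{iu} + i t\, e^{iu} B(u) + o(t)$ from (\ref{ht}) together with the identity $\partial_t\bigl[(h-w)(1-\bar w h)^{-1}\bigr] = \dot h\,(1 - |w|^2)(1-\bar w h)^{-2}$, one computes
\begin{equation*}
F_t|_{t=0,\, w=z} = \frac{i(1-|z|^2)^2}{2\pi}\int_0^{2\pi}\frac{e^{2iu}B(u)}{(1-\bar z e^{iu})^3(e^{iu}-z)}\,du,
\end{equation*}
so that
\begin{equation*}
\dot f(z) = (1-|z|^2)\,F_t|_{t=0,\, w=z} = \frac{i(1-|z|^2)^3}{2\pi}\int_0^{2\pi}\frac{e^{2iu}B(u)\,du}{(1-\bar z e^{iu})^3(e^{iu}-z)}.
\end{equation*}

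The final step is to apply $\partial_{\bar z}$ to this formula, which produces two pieces: one from $\partial_{\bar z}(1-|z|^2)^3 = -3z(1-|z|^2)^2$ and one from $\partial_{\bar z}(1-\bar z e^{iu})^{-3} = 3 e^{iu}(1-\bar z e^{iu})^{-4}$. The two resulting integrands combine via the purely algebraic identity
\begin{equation*}
-\frac{z}{(1-\bar z e^{iu})^3(e^{iu}-z)} + \frac{(1-|z|^2)\,e^{iu}}{(1-\bar z e^{iu})^4(e^{iu}-z)} = \frac{1}{(1-\bar z e^{iu})^4},
\end{equation*}
obtained by putting everything over a common denominator (the numerator collapses to $e^{iu} - z$), multiplying by $3 e^{2iu} B(u)$ and integrating. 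This produces precisely the asserted formula for $de(0; b)$.

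The main obstacle is upgrading this formal computation to a genuine Gâteaux derivative in the norm of $\mathcal{L}(\mathbb{D})$: one needs $(\mu_t - 0)/t \to de(0; b)$ as elements of $\mathcal{L}(\mathbb{D})$ and continuity of $b \mapsto de(0; b)$ from $BMO_{\mathbb{R}}(\mathbb{S})$ to $\mathcal{L}(\mathbb{D})$. Linearity in $b$ is transparent from the explicit formula (since $B$ depends linearly on $b$); continuity and uniformity of the $t$-limit reduce to estimating the kernel $(1-\bar z e^{iu})^{-4}$ paired with the BMO-primitive $B(u)$ at the level of Carleson measures, which one handles by combining Lemma \ref{CM} with the $L^p$ control of BMO functions on the circle afforded by John--Nirenberg.
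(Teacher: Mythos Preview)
Your derivation of the explicit formula is correct and in fact more economical than the paper's. The paper does not pass through the identity $\partial_t\mu_t|_{t=0}=\partial_{\bar z}\dot f$; instead it writes $\mu_t=N/D$ with $N=\overline{F_z}F_{\bar w}-F_{\bar z}\overline{F_w}$ obtained from the implicit--function formulas for $(f_t)_z,(f_t)_{\bar z}$, observes $N|_{t=0}=0$, and then computes $\dot N|_{t=0}$ by the chain rule, which forces separate evaluations of $\tfrac{d}{dt}F_{\bar z}(t,z,f_t(z))|_{t=0}$ and $\tfrac{d}{dt}F_{\bar w}(t,z,f_t(z))|_{t=0}$ (each of these again involves $\dot f$). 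Your route collapses all of this into the single $\partial_{\bar z}$--differentiation of $\dot f$ and the neat algebraic identity you wrote down. Both approaches compute $\dot f$ the same way and agree with the paper's formula (\ref{ft0}); yours simply avoids the detour through $F_{\bar z t},F_{\bar w t},F_{\bar w w}$.

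Where your proposal falls short is the continuity step. Your last paragraph is not an argument: Lemma~\ref{CM} concerns area integrals over $\mathbb{D}$ of the shape $\iint_{\mathbb D}(1-|z|^2)^\alpha(1-|w|^2)^\beta|1-\bar zw|^{-\alpha-\beta-2}\lambda(w)\,dudv$, not boundary integrals against $(1-\bar z e^{iu})^{-4}$, and John--Nirenberg by itself does not produce a Carleson--measure bound for $(1-|z|^2)^3\bigl|\int_0^{2\pi}e^{2iu}B(u)(1-\bar ze^{iu})^{-4}du\bigr|^2$. The paper's argument is quite different and concrete: split $B(u)=F(e^{iu})+\overline{F(e^{iu})}$ with $F$ analytic (via Fourier series), observe that the analytic piece contributes zero by Cauchy's theorem, evaluate the anti-analytic piece by the Cauchy integral formula as $\tfrac{\pi}{3}\,\overline{((zF(z))')''}$, note that $(zF)'\in BMOA(\mathbb D)$ because $b\in BMO$, and invoke the Fefferman--Stein characterization of $BMOA$ to conclude that $(1-|z|^2)^3|((zF)')''|^2\,dxdy$ is a Carleson measure with norm controlled by $\|b\|_{BMO}^2$. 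You should replace your closing paragraph by this computation (or an equivalent one).

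One further remark: you raise the issue of convergence $(\mu_t-0)/t\to de(0;b)$ in the norm of $\mathcal L(\mathbb D)$. Note that the paper explicitly adopts a weaker notion in its Remark preceding the proof: G\^ateaux differentiability here means only that the pointwise limit $\tfrac{d\mu_t}{dt}|_{t=0}$ exists and that $b\mapsto\tfrac{d\mu_t}{dt}|_{t=0}$ is linear and continuous from $BMO_{\mathbb R}(\mathbb S)$ to $\mathcal L(\mathbb D)$. The paper does not prove norm convergence of the difference quotient, so you need not either.
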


\vspace{4mm}

\textbf{Remark:} We shall specify now what we mean by G\^ateaux differentiability at the origin of the Douady-Earle extension operator. 

According to the above argument, the origin $u = 0$ of the Banach space $BMO_{\mathbb{R}}(\mathbb{S})$ corresponds to the origin $h = Id$ of the space $SQS(\mathbb{S})$. Then the corresponding complex dilatation $\mu$ is equal to $0$.

Let $t > 0$ be small. For any $b \in BMO_{\mathbb{R}}(\mathbb{S})$, the strongly quasisymmetric homeomorphism $h_t$ can be defined as before. The complex dilatation $\mu_t$ denotes the image of $h_t$ under the Douady-Earle extension operator.

By saying that the operator $e: \,h \mapsto \mu$ from $SQS(\mathbb{S})$ onto its image in $\mathcal{M}(\mathbb{D})$ is G\^ateaux differentiable at the origin, we mean (by definition) that $\frac{d\mu_t}{dt}|_{t = 0}$ exists, and it is linear and continuous in $b$.

\vspace{4mm}

\begin{proof}
We compute $\mu_t$ using the implicit function theorem and the formula $F(t, z, f_t(z)) = 0$. We get (writing $F_z$ for $F_z (t, z, w)$, etc.) the system 
\begin{equation*}
F_{\bar{z}} + F_{\bar{w}} (\overline{f_t})_{\bar{z}} + F_w (f_t)_{\bar{z}} = 0, \qquad \bar{F}_{\bar{z}} + \bar{F}_{w} (f_t)_{\bar{z}} + \bar{F}_{\bar{w}} (\overline{f_t})_{\bar{z}} = 0,
\end{equation*}
whose solution is
\begin{equation*}
(f_t)_{\bar{z}} = \frac{\bar{F}_{\bar{z}}F_{\bar{w}} - F_{\bar{z}} \bar{F}_{\bar{w}}}{|F_w|^2 - |F_{\bar{w}}|^2}, \qquad 
(f_t)_z = \frac{\bar{F}_z F_{\bar{w}} - F_z \bar{F}_{\bar{w}}}{|F_w|^2 - |F_{\bar{w}}|^2},
\end{equation*}
and finally
\begin{equation*}
\mu_t = \frac{(f_t)_{\bar{z}}}{(f_t)_z} = \frac{\overline{(F_z)}F_{\bar{w}} - F_{\bar{z}}\overline{(F_w)}}{\overline{(F_{\bar{z}})} F_{\bar{w}} - F_z \overline{(F_w)}} \triangleq \frac{N}{D}.
\end{equation*}
Therefore,
\begin{equation*}
\frac{d \mu_t}{d t}|_{t = 0} = (\frac{1}{D} \dot{N} - \frac{N}{D^2} \dot{D})|_{t = 0}.
\end{equation*}

Recall that 
\begin{equation*}
F(t, z, w) = \frac{1}{2\pi} \int_{0}^{2\pi} \frac{h_t(e^{iu}) - w}{1 - \bar{w}h_t(e^{iu})}\frac{1 - |z|^2}{|z - e^{iu}|^2}du.
\end{equation*}
Then we have
\begin{equation}\label{Fzbar}
F_{\bar{z}}(t, z, w) = \frac{1}{2\pi} \int_{0}^{2\pi} \frac{h_t(e^{iu}) - w}{1 - \bar{w}h_t(e^{iu})}\frac{e^{-iu}}{(\bar{z} - e^{-iu})^2}du,
\end{equation}
and
\begin{equation}\label{Fz}
F_{z}(t, z, w) = \frac{1}{2\pi} \int_{0}^{2\pi} \frac{h_t(e^{iu}) - w}{1 - \bar{w}h_t(e^{iu})}\frac{e^{iu}}{(z - e^{iu})^2}du,
\end{equation}
and
\begin{equation}\label{Fwbar}
F_{\bar{w}}(t, z, w) = \frac{1}{2\pi} \int_{0}^{2\pi} \frac{h_t(e^{iu})(h_t(e^{iu}) - w)}{(1 - \bar{w}h_t(e^{iu}))^2}\frac{1 - |z|^2}{|z - e^{iu}|^2}du,
\end{equation}
and
\begin{equation}\label{Fw}
F_{w}(t, z, w) = - \frac{1}{2\pi} \int_{0}^{2\pi} \frac{1 - |z|^2}{(1 - \bar{w}h_t(e^{iu})) |z - e^{iu}|^2}du.
\end{equation}
Noting that $w = E(h)(z) = z$ if $h(z) = z$, we conclude that 
\begin{equation}\label{Fz0}
F_{\bar{z}}(t, z, w) |_{t = 0} = F_{\bar{z}}(0, z, z) = 0, \qquad F_{z}(t, z, w) |_{t = 0} = F_{z}(0, z, z) = \frac{1}{1 - |z|^2},
\end{equation}
and 
\begin{equation}\label{Fw0}
F_{\bar{w}}(t, z, w) |_{t = 0} = F_{\bar{w}}(0, z, z) = 0, \qquad F_{w}(t, z, w) |_{t = 0} = F_{w}(0, z, z) = - \frac{1}{1 - |z|^2}.
\end{equation}
Then
\begin{equation*}
N|_{t = 0} = [\overline{(F_z)}F_{\bar{w}} - F_{\bar{z}}\overline{(F_w)}]|_{t = 0} = 0.
\end{equation*}
Thus we have 
\begin{equation}\label{mut1}
\frac{d \mu_t}{d t}|_{t = 0} = \frac{1}{D} \dot{N}|_{t = 0} = (1 - |z|^2)^{-1} (\frac{d}{d t} F_{\bar{w}}(t , z, w) + \frac{d}{d t} F_{\bar{z}}(t , z, w))|_{t = 0}.
\end{equation}

We next compute $\frac{d}{d t} F_{\bar{w}}(t , z, w)|_{t = 0}$ and $\frac{d}{d t} F_{\bar{z}}(t , z, w)|_{t = 0}$. Let us start by computing $\frac{\partial f_t}{\partial t}(z)|_{t = 0}$.
Differentiating $F(t, z, f_t(z)) = 0$ and $\overline{F(t, z, f_t(z))} = 0$ with respect to $t$ using the implicit function theorem we get the system 
\begin{equation*}
\begin{split}
& F_t(0, z, z) + F_w(0, z, z) \frac{\partial f_t}{\partial t} (z)|_{t = 0} + F_{\bar{w}} (0, z, z) \overline{\frac{\partial f_t}{\partial t} (z)|_{t = 0}} = 0,\\
& \overline{F_t(0, z, z)} + \overline{F_{\bar{w}}(0, z, z)}  \frac{\partial f_t}{\partial t} (z)|_{t = 0} + \overline{F_w(0, z, z)} \overline{\frac{\partial f_t}{\partial t} (z)|_{t = 0}} = 0,\\
\end{split}
\end{equation*}
whose solution is 
\begin{equation*}
\frac{\partial f_t}{\partial t} (z)|_{t = 0} = \frac{F_{\bar{w}}(0, z, z) \overline{(F_t (0, z, z))} -   \overline{(F_w (0, z, z))} F_t (0, z, z)}{|F_w (0, z, z)|^2 - |F_{\bar{w}}(0, z, z)|^2}.
\end{equation*}
By (\ref{ht}) we have 
\begin{equation}\label{Ft0}
F_t (0, z, z) = \frac{1}{2\pi} \int_0^{2\pi} \frac{(1 - z \bar{z})^2 i e^{2iu} B(u)}{(e^{iu} - z)(1 - \bar{z} e^{iu})^3} du.
\end{equation}
It follows from (\ref{Fw0}) and (\ref{Ft0}) that 
\begin{equation}\label{ft0}
\frac{\partial f_t}{\partial t} (z)|_{t = 0} = \frac{(1 - z \bar{z})^3}{2\pi} \int_0^{2\pi} \frac{i e^{2iu} B(u)}{(e^{iu} - z)(1 - \bar{z} e^{iu})^3} du.
\end{equation}

The derivative at $t = 0$ of the function $F_{\bar{w}}(t, z, w)$ is 
\begin{equation*}
\frac{d}{d t} F_{\bar{w}} (t, z, w)|_{t = 0} = F_{\bar{w}t}(0, z, z) + F_{\bar{w}w}(0, z, z) \frac{\partial f_t}{\partial t} (z)|_{t = 0} + F_{\bar{w}\bar{w}}(0, z, z)\overline{\frac{\partial f_t}{\partial t} (z)|_{t = 0}}.
\end{equation*}
By means of (\ref{Fwbar}) we have 
\begin{equation*}
F_{\bar{w}t}(0, z, z) = \frac{1 - |z|^2}{2\pi} \int_0^{2\pi} \frac{i e^{3iu} B(u)}{(1 - \bar{z} e^{iu})^3} (\frac{1}{e^{iu} - z} + \frac{2\bar{z}}{1 - \bar{z} e^{iu}}  + e^{-iu}) du,
\end{equation*}
and
\begin{equation*}
F_{\bar{w}w}(0, z, z) = -\frac{z}{(1 - |z|^2)^2}, \qquad  F_{\bar{w}\bar{w}}(0, z, z) = 0.
\end{equation*}
Then
\begin{equation}\label{F1}
\begin{split}
\frac{d}{d t} F_{\bar{w}} (t, z, w)|_{t = 0} & = - \frac{1 - |z|^2}{2\pi i} \int_0^{2\pi} \frac{e^{3iu} B(u)}{(1 - \bar{z} e^{iu})^3} (\frac{1}{e^{iu} - z} + \frac{2\bar{z}}{1 - \bar{z} e^{iu}}  + e^{-iu}) du\\
& + z (1 - |z|^2) \frac{1}{2\pi i} \int_0^{2\pi} \frac{e^{2iu} B(u)}{(e^{iu} - z)(1 - \bar{z} e^{iu})^3} du.\\
\end{split}
\end{equation}

Similarly, 
\begin{equation*}
\frac{d}{d t} F_{\bar{z}} (t, z, w)|_{t = 0} = F_{\bar{z}t}(0, z, z) + F_{\bar{z}w}(0, z, z) \frac{\partial f_t}{\partial t} (z)|_{t = 0} + F_{\bar{z}\bar{w}}(0, z, z)\overline{\frac{\partial f_t}{\partial t} (z)|_{t = 0}}.
\end{equation*}
By (\ref{Fzbar}) we have 
\begin{equation*}
F_{\bar{z}t}(0, z, z) = \frac{1 - |z|^2}{2\pi} \int_0^{2\pi} \frac{i e^{2iu}B(u)}{(1 - \bar{z} e^{iu})^4} du,
\end{equation*}
and 
\begin{equation*}
F_{\bar{z}w}(0, z, z) = 0, \qquad F_{\bar{z}\bar{w}}(0, z, z) = 0.
\end{equation*}
Therefore, 
\begin{equation}\label{F2}
\frac{d}{d t} F_{\bar{z}} (t, z, w)|_{t = 0} = - \frac{1 - |z|^2}{2\pi i} \int_0^{2\pi} \frac{e^{2iu}B(u)}{(1 - \bar{z} e^{iu})^4} du.
\end{equation}

It follows from (\ref{mut1}) (\ref{F1}) and (\ref{F2}) that 
\begin{equation}\label{mut2}
\frac{d \mu_t}{d t}|_{t = 0} = - \frac{(1 - |z|^2)^2}{2\pi i} \int_0^{2\pi} \frac{3 e^{2iu}B(u)}{(1 - \bar{z} e^{iu})^4} du.
\end{equation}
It is easy to see $\frac{d \mu_t}{d t}|_{t = 0}$ is linear in $b$.

Finally we need to show continuity, i.e.  that 
\begin{equation}\label{FS}
\frac{|\frac{d \mu_t}{d t}|_{t = 0}|^2}{1 - |z|^2} dxdy \in CM(\mathbb{D})
\end{equation}
with norm controlled by $\|b\|_{BMO}^2$. 
Since each $b \in BMO_{\mathbb{R}}(\mathbb{S})$ with $\int_0^{2\pi} b(x) dx = 0$ is a real-valued periodic function with period $2\pi$, the Fourier series of the function $b(x)$ is given by 
\begin{equation*}
b(x) = \sum_{n > 0} b_n e^{inx} + \sum_{n > 0} b_{-n}e^{-inx}
\end{equation*}
where
\begin{equation*}
b_n = \frac{1}{2\pi}\int_0^{2\pi} b(x) e^{-inx} dx, \qquad b_{- n} = \frac{1}{2\pi}\int_0^{2\pi} b(x) e^{inx} dx.
\end{equation*}
Since $b$ is real-valued, we have $b_{-n} = \overline{b_n}$. Then
\begin{equation*}
b(x) = \sum_{n > 0} b_n e^{inx} + \overline{\sum_{n > 0} b_{n}e^{inx}}.
\end{equation*}
It follows from the relation $B(u) = \int_0^u b(x) dx$ that $B(u) \in BMO_{\mathbb{R}}(\mathbb{S})$ and 
\begin{equation*}
B(u) = -i \sum_{n > 0} \frac{1}{n} b_n e^{inu} + \overline{(-i \sum_{n > 0} \frac{1}{n} b_n e^{inu})}.
\end{equation*}
Set $F(e^{iu}) = -i \sum_{n > 0} \frac{1}{n} b_n e^{inu}$. Then $B(u) = F(e^{iu}) + \overline{F(e^{iu})}$.

To prove (\ref{FS}), write 
\begin{equation*}
\begin{split}
\int_0^{2\pi}\frac{e^{2iu}B(u)}{(1 - \bar{z} e^{iu})^4} du & = \int_0^{2\pi} \frac{e^{2iu}F(e^{iu})}{(1 - \bar{z} e^{iu})^4} du + \int_0^{2\pi}\frac{e^{2iu}\overline{F(e^{iu})}}{(1 - \bar{z} e^{iu})^4} du\\
& = I_1 + I_2.\\
\end{split}
\end{equation*}
The first term on the right hand side $I_1$ equals to $0$ by Cauchy Theorem. By Cauchy integral formula we have 
\begin{equation*}
I_2 = \overline{\int_0^{2\pi}\frac{e^{-2iu} F(e^{iu})}{(1 - z e^{-iu})^4} du} = \overline{-i \int_{\mathbb{S}} \frac{\zeta F(\zeta)}{(\zeta - z)^4} d\zeta} = \overline{\frac{\pi}{3} ((z F(z))^{'})^{''}}.
\end{equation*}
Noting that 
$$(z F(z))^{'} = F(z) + zF^{'}(z) = (-i \sum_{n > 0}\frac{1}{n}b_n z^n) + (-i \sum_{n > 0} b_n z^n) \in BMOA(\mathbb{D}).$$
By a result of Fefferman and Stein \cite{FS72}, since $z\mapsto zF'(z)+F(z)$ is in $BMOA(\mathbb{D})$,
\begin{equation*}
\frac{|\frac{d \mu_t}{d t}|_{t = 0}|^2}{1 - |z|^2} dxdy = \frac{1}{4} |((zF(z))^{'})^{''}|^2 (1 - |z|^2)^3 dxdy  \in CM(\mathbb{D})
\end{equation*}
with norm controlled by $\|b\|_{BMO}^2$.
\end{proof}

\end{document}